\newcommand{\NN}{\mathbb N}
\newcommand{\CC}{\mathbb C}
\newcommand{\RR}{\mathbb R}
\newcommand{\ZZ}{\mathbb Z}
\newcommand{\ds}{\displaystyle}
\newcommand{\DD}{\mathcal D}
\newcommand{\SSS}{\mathcal S}
\newtheorem{theorem}{Theorem}[section]
\newtheorem{proposition}[theorem]{Proposition}
\newtheorem{lemma}[theorem]{Lemma}
\newtheorem{corollary}[theorem]{Corollary}
\theoremstyle{remark}
\newtheorem{remark}[theorem]{Remark}
\theoremstyle{definition}
\newtheorem{definition}[theorem]{Definition}
\numberwithin{equation}{section}
\newcommand{\beq}{\begin{eqnarray}}
\newcommand{\eeq}{\end{eqnarray}}
\newcommand{\beqs}{\begin{eqnarray*}}
\newcommand{\eeqs}{\end{eqnarray*}}
\begin{document}

\title{Convolution with the kernel $e^{s\langle x\rangle^q}, q\geq 1, s>0$ within ultradistribution spaces}

\author[S. Pilipovi\'c]{Stevan Pilipovi\'c}
\address{Stevan Pilipovi\'c, Department of Mathematics and Informatics,
University of Novi Sad, Trg Dositeja Obradovi\'{c}a 4, 21000 Novi Sad, Serbia}
\email{stevan.pilipovic@dmi.uns.ac.rs}

\author[B. Prangoski]{Bojan Prangoski}
\thanks{B. Prangoski was partially supported by the bilateral project ``Microlocal analysis and applications'' funded by the Macedonian and Serbian academies of sciences and arts}
\address{Bojan Prangoski, Faculty of Mechanical Engineering\\ Ss. Cyril and Methodius University in Skopje \\ Karpos II bb \\ 1000 Skopje \\ Macedonia}
\email{bprangoski@yahoo.com}

\author[\DJ. Vuckovi\' c]{\DJ or\dj e Vu\v{c}kovi\' c}
\address{\DJ or\dj e Vu\v ckovi\'c, Mathematical Institute of the Serbian Academy of Sciences and Arts, Knez Mihailova 36, 11000 Belgrade, Serbia}
\email{djordjeplusja@gmail.com}

\thanks{The work presented in this paper  is partially supported by Ministry of Education and  Science, Republic of Serbia, project no. 174024.}

\keywords{Convolution with exponentials, Gelfand-Shilov spaces, ultradistributions}

\subjclass[2010]{46F10 46F05}

\begin{abstract}
We consider the existence of convolution of Roumieu type ultradistribution with the kernel $e^{s(1+|x|^2)^{q/2}}, q\geq 1$, $s\in\RR\backslash\{0\}$.
\end{abstract}
\maketitle

\section{Introduction}
In  time frequency analysis, when one needs to analyse modulation or Wiener amalgam spaces with  super-exponential weights,  or more generally, to study
 ultradistributions of quasi-analytical classes, kernels of the form $e^{s(1+|x|^2)^{q/2}}, q\geq 1$, $s\in\RR\backslash\{0\}$ are involved.  Our aim is to show that it is possible to study convolution with such kernel beyond a very general theory we have developed in \cite{BSV}.

The problem of when a distribution in $\DD'(\RR^d)$ admits a $\DD'$-convolution with the Gaussian $e^{s|x|^2}$, $s\in\RR\backslash\{0\}$, was studied in \cite{Wagner} and the distributions that do satisfy this property were characterised by their growth behaviour. Later, the same problem was also solved in the setting of non-quasianalytic ultradistributions in \cite{pp} and applied in extending the Anti-Wick quantisation to more general symbol classes. Our aim in this article is to find conditions on the Gelfand-Shilov ultradistributions in $\SSS'^{\{M_p\}}_{\{p!^{1/q_1}\}}(\RR^d)$ which are sufficient for the existence of convolution with the function $x\mapsto e^{s\langle x\rangle^q}$, $s\in\RR\backslash\{0\}$, $q_1>q\geq 1$, where, as standard, $\langle x\rangle=(1+|x|^2)^{1/2}$ (of course, when $q=2$, $e^{s\langle x\rangle^2}$ is just a constant multiple of the Gaussian). Here $\{M_p\}_{p\in\NN}$ is a Gevrey type sequence, bounded from below by $\{p!\}_{p\in\NN}$, which controls the growth of the derivatives of the test functions.\\
\indent The definition of the convolution of $S\in \SSS'^{\{M_p\}}_{\{p!^{1/q_1}\}}(\RR^d)$ with $e^{s\langle \cdot\rangle^q}$ we employ is the following: we say $S$ is convolvable with $e^{s\langle \cdot\rangle^q}$ if for every test function $\varphi\in \SSS^{\{M_p\}}_{\{p!^{1/q_1}\}}(\RR^d)$, $(\varphi*e^{s\langle \cdot\rangle^q})S\in \DD'^{\{M_p\}}_{L^1}(\RR^d)$ (the ultradistributional analogue of the Schwartz space $\DD'_{L^1}(\RR^d)$) and $S*e^{s\langle \cdot\rangle^q}\in \SSS'^{\{M_p\}}_{\{p!^{1/q_1}\}}(\RR^d)$ is defined as
$$
\langle S*e^{s\langle \cdot\rangle^q},\varphi\rangle=\langle (\varphi*e^{s\langle \cdot\rangle^q})S,1\rangle
$$
(see Subsection \ref{structure and topology} for the reasons why this is indeed a well defined element of $\SSS'^{\{M_p\}}_{\{p!^{1/q_1}\}}(\RR^d)$). When $q_1\leq 1$, this reduces to the definition of $\SSS'^{\{M_p\}}_{\{p!^{1/q_1}\}}$-convolution of ultradistributions, see \cite[Definition 5.7 and Theorem 5.8]{BSV} (see also \cite{DPPV,PB,PK,AJs}) and is analogous to one of the equivalent formulations of $\SSS'$-convolution in the distributional setting (see \cite{SchwartzV,shiraishi}; see also \cite{BOO,div,orttt,ortwagn}).\\
\indent When studying the convolution with the Gaussian, one of the key ingredients employed in \cite{Wagner} and \cite{pp} is the fact that $e^{s|x-t|^2}$ splits into a product of two parts: $e^{s|x|^2}$ and $e^{-2sxt+s|t|^2}$. However, when $q\neq 2$ this is no longer the case, and thus we can not apply the same ideas from \cite{Wagner} and \cite{pp}. In fact, a significant part of Section \ref{suff-cond-ee} is devoted to finding good bounds on the growth of the derivatives of $e^{s\langle x\rangle^q}$ before we provide  sufficient conditions in Theorem \ref{cond-for-exi-con-tt}. It is important to emphasise that, when $q=2$, these are analogous to the ones given in the distributional setting as well as in the non-quasianalytic ultradistributional setting (see the remark at the very end of Section \ref{suff-cond-ee}).\\
\indent In Section \ref{neces-con-sdd} we give also necessary conditions for the existence of the convolution with $e^{s\langle \cdot\rangle^q}$, when $s>0$. When $q=1$ they turn out to be the same as the sufficient conditions, and thus providing  a characterisation of the ultradistributions in $\SSS'^{\{M_p\}}_{\{p!^{1/q_1}\}}(\RR^d)$ which are convolvable with $e^{s\langle \cdot\rangle}$, $s>0$.\\
\\
\indent The main results of the paper can be summarised as follows (see Theorem \ref{cond-for-exi-con-tt} and Corollaries \ref{exis-con-cor-ls}, \ref{exi-con-whe-qq} and \ref{cort-l-stv}):\\
{\it
Let $q_1>q\geq 1$, $s\in\RR\backslash\{0\}$ and $S\in\SSS'^{\{p!\}}_{\{p!^{1/{q_1}}\}}(\RR^d)$.\\
\indent 1. If
$$
e^{s\langle \cdot\rangle^q}e^{k\langle \cdot\rangle^{(q-1)q_1/(q_1-1)}} S\in \DD'^{\{p!\}}_{L^1}(\RR^d),\quad \mbox{for all}\,\, k\geq 0,
$$
then the $\SSS'^{\{p!\}}_{\{p!^{1/q_1}\}}$-convolution of $S$ and $e^{s\langle \cdot\rangle^q}$ exists.\\
\indent 2. When $q=1$, the $\SSS'^{\{p!\}}_{\{p!^{1/q_1}\}}$-convolution of $S$ and $e^{s\langle \cdot\rangle}$, $s>0$, exists if and only if $e^{s\langle \cdot\rangle}S\in \DD'^{\{p!\}}_{L^1}(\RR^d)$.\\
\indent 3. Let
$S\in\SSS'^{\{p!^{2-1/q}\}}_{\{p!^{1/q_1}\}}(\RR^d)$. If the
$\SSS'^{\{p!^{2-1/q}\}}_{\{p!^{1/q_1}\}}$-convolution of $S$ and $e^{s\langle \cdot\rangle^q}$, $s>0$, exists then the $\SSS'^{\{p!^{2-1/q}\}}_{\{p!^{1/q_1}\}}$-convolution of $S$ and $e^{s'\langle \cdot\rangle^q}$ also exists for all $s'<s$, $s'\neq 0$.
}\\
\\
\indent Note that the Beurling ultradistribution spaces give even more flexibility in the analysis of these problems, so most of the results (with appropriate changes) can be easily transferred to the ultradistribution spaces of Beurling class.

\section{Preliminaries}

Let $\{M_p\}_{p\in\mathbb{N}}$ be a sequence of positive numbers satisfying $M_0=M_1=1$. Unless stated otherwise, we will always assume the following conditions on $\{M_p\}_{p\in\NN}$ (cf. \cite{Komatsu1}):\\
\indent $(M.1)$ $M_{p}^{2} \leq M_{p-1} M_{p+1}$, $p \in\ZZ_+$;\\
\indent $(M.2)$ $\ds M_{p} \leq c_0H^{p} \min_{0\leq q\leq p} \{M_{p-q} M_{q}\}$, $p,q\in \NN$, for some $c_0,H\geq1$;\\
\indent $(M.5)$ there exists $q>0$ such that $M_p^q$ is strongly
non-quasianalytic (see \cite{Komatsu1}), i.e., there exists $c_0\geq 1$ such that $
\sum_{j=p+1}^{\infty}{M_{j-1}^q}/{M_j^q}\leq c_0
p{M_p^q}/{M_{p+1}^q}$, $\forall p\in\ZZ_+$;\\
\indent $(M.6)$ $p!\subset M_p$, i.e. there exist $C,L>0$ such that $p!\leq CL^pM_p$, $\forall p\in\NN$.\\
For each $\alpha\in\NN^d$, we set $M_{\alpha}=M_{|\alpha|}$. Note that the Gevrey sequences $\{p!^{\kappa}\}_{p\in\NN}$, with $\kappa\geq 1$, satisfy all of the above conditions.\\
\indent Let $q>0$. For every $h>0$, we denote by $\SSS^{M_p,h}_{p!^{1/q},h}(\RR^d)$ the Banach space of all $\varphi\in\mathcal{C}^{\infty}(\RR^d)$ for which the following norm is finite
\beqs
\sup_{\alpha\in\NN^d}h^{|\alpha|}\|e^{h|\cdot|^q} \partial^{\alpha}\varphi\|_{L^{\infty}(\RR^d)}/M_{\alpha}<\infty.
\eeqs
Recall, the Gelfand-Shilov test space $\SSS^{\{M_p\}}_{\{p!^{1/q}\}}(\RR^d)$ is defined as
\beqs
\SSS^{\{M_p\}}_{\{p!^{1/q}\}}(\RR^d)=\lim_{\substack{\longrightarrow \\ h\rightarrow 0^+}} \SSS^{M_p,h}_{p!^{1/q},h}(\RR^d).
\eeqs
Its strong dual $\SSS'^{\{M_p\}}_{\{p!^{1/q}\}}(\RR^d)$ is the space of Gelfand-Shilov ultradistributions; we refer to \cite{PilipovicK,GS,BSV} for its properties. Next, for each $h>0$, let $\DD^{M_p,h}_{L^{\infty}}(\RR^d)$ be the Banach space of all $\varphi\in\mathcal{C}^{\infty}(\RR^d)$ such that $\sup_{\alpha\in\NN^d}h^{|\alpha|} \|\partial^{\alpha}\varphi\|_{L^{\infty}(\RR^d)}/M_{\alpha}<\infty$, and define
\beqs
\DD^{\{M_p\}}_{L^{\infty}}(\RR^d)=\lim_{\substack{\longrightarrow\\ h\rightarrow 0^+}} \DD^{M_p,h}_{L^{\infty}}(\RR^d);
\eeqs
it is a complete barrelled $(DF)$-space (see \cite[Section 4.3]{dpv-2}). Fix $q>0$ and denote by $\dot{\mathcal{B}}^{\{M_p\}}(\RR^d)$ the closure of $\SSS^{\{M_p\}}_{\{p!^{1/q}\}}(\RR^d)$ in $\DD^{\{M_p\}}_{L^{\infty}}(\RR^d)$ and let $\DD'^{\{M_p\}}_{L^1}(\RR^d)$ be the strong dual of $\dot{\mathcal{B}}^{\{M_p\}}(\RR^d)$. As $\SSS^{\{M_p\}}_{\{p!^{1/q'}\}}(\RR^d)$, with $q'>q$, is continuously and densely injected into $\SSS^{\{M_p\}}_{\{p!^{1/q}\}}(\RR^d)$, $\dot{\mathcal{B}}^{\{M_p\}}(\RR^d)$ is the closure of $\SSS^{\{M_p\}}_{\{p!^{1/q'}\}}(\RR^d)$ in $\DD^{\{M_p\}}_{L^{\infty}}(\RR^d)$, for any $q'>0$. We mention several important properties of the space $\dot{\mathcal{B}}^{\{M_p\}}(\RR^d)$, $\DD'^{\{M_p\}}_{L^1}(\RR^d)$ and $\DD^{\{M_p\}}_{L^{\infty}}(\RR^d)$ and refer to \cite[Section 4.3]{dpv-2} for the complete account. First, $\dot{\mathcal{B}}^{\{M_p\}}(\RR^d)$ is a complete barrelled $(DF)$-space and thus $\DD'^{\{M_p\}}_{L^1}(\RR^d)$ is an $(F)$-space. The strong dual of the latter is topologically isomorphic to $\DD^{\{M_p\}}_{L^{\infty}}(\RR^d)$ (i.e. $\DD^{\{M_p\}}_{L^{\infty}}(\RR^d)$ is the strong bidual of $\dot{\mathcal{B}}^{\{M_p\}}(\RR^d)$). Denote by $\DD^{\{M_p\}}_{L^{\infty},c}(\RR^d)$ the space $\DD^{\{M_p\}}_{L^{\infty}}(\RR^d)$ equipped with the topology of compact convex circled convergence from the duality $\langle \DD'^{\{M_p\}}_{L^1}(\RR^d),\DD^{\{M_p\}}_{L^{\infty}}(\RR^d)\rangle$. Then $\DD^{\{M_p\}}_{L^{\infty},c}(\RR^d)$ is a complete space whose topology is weaker then the original one, $\dot{\mathcal{B}}^{\{M_p\}}(\RR^d)$ is continuously and densely injected into $\DD^{\{M_p\}}_{L^{\infty},c}(\RR^d)$ and the strong dual of $\DD^{\{M_p\}}_{L^{\infty},c}(\RR^d)$ is topologically isomorphic to $\DD'^{\{M_p\}}_{L^1}(\RR^d)$ (see \cite[Section 5]{BSV}).

\subsection{Convolution of Gelfand-Shilov ultradistributions}
\label{structure and topology}

Let $0<q\leq 1$ and $S,T\in\SSS'^{\{M_p\}}_{\{p!^{1/q}\}}(\RR^d)$. The $\SSS'^{\{M_p\}}_{\{p!^{1/q}\}}$-convolution of $S$ and $T$ is said to exists if $(S\otimes T)\varphi^{\Delta}\in \DD'^{\{M_p\}}_{L^1}(\RR^{2d})$, for all $\varphi\in\SSS^{\{M_p\}}_{\{p!^{1/q}\}}(\RR^d)$, where $\varphi^{\Delta}(x,y)=\varphi(x+y)$. When this is the case, $S*T\in \SSS'^{\{M_p\}}_{\{p!^{1/q}\}}(\RR^d)$ is defined as
\beqs
\langle S*T,\varphi\rangle ={}_{\DD'^{\{M_p\}}_{L^1}(\RR^{2d})}\langle (S\otimes T)\varphi^{\Delta},1\rangle_{\DD^{\{M_p\}}_{L^{\infty},c}(\RR^{2d})},\quad \varphi\in\SSS^{\{M_p\}}_{\{p!^{1/q}\}}(\RR^d),
\eeqs
where $1$ denotes the constant one function; see the comments after \cite[Definition 5.7]{BSV} for the reason why $S*T$ is indeed a continuous functional on $\SSS^{\{M_p\}}_{\{p!^{1/q}\}}(\RR^d)$. Furthermore, we have the following result.

\begin{theorem}\label{krclvn137}\cite[Theorem 5.8 and Remark 5.9]{BSV}
Let $0<q\leq 1$ and $S,T\in\SSS'^{\{M_p\}}_{\{p!^{1/q}\}}(\RR^d)$. The following statements are equivalent:
\begin{itemize}
\item[$(i)$] the $\SSS'^{\{M_p\}}_{\{p!^{1/q}\}}$-convolution of $S$ and $T$ exists;
\item[$(ii)$] for all $\varphi\in\SSS^{\{M_p\}}_{\{p!^{1/q}\}}(\RR^d)$, $(\varphi*\check{T})S\in\DD'^{\{M_p\}}_{L^1}(\RR^d)$;
\item[$(iii)$] for all $\varphi\in\SSS^{\{M_p\}}_{\{p!^{1/q}\}}(\RR^d)$, $(\varphi*\check{S})T\in\DD'^{\{M_p\}}_{L^1}(\RR^d)$.
\end{itemize}
Furthermore, when the $\SSS'^{\{M_p\}}_{\{p!^{1/q}\}}$-convolution of $S$ and $T$ exists,
\beq\label{motiv}
\langle S*T,\varphi\rangle ={}_{\DD'^{\{M_p\}}_{L^1}(\RR^d)}\langle (\varphi*\check{T})S,1\rangle_{\DD^{\{M_p\}}_{L^{\infty},c}(\RR^d)}= {}_{\DD'^{\{M_p\}}_{L^1}(\RR^d)}\langle (\varphi*\check{S})T,1\rangle_{\DD^{\{M_p\}}_{L^{\infty},c}(\RR^d)}.
\eeq
\end{theorem}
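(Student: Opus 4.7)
My plan is to reduce everything to the equivalence $(i)\Leftrightarrow(ii)$, noting that $(i)\Leftrightarrow(iii)$ then follows by applying the same argument after the change of variables $(x,y)\mapsto(y,x)$, which preserves $\varphi^{\Delta}$ and interchanges the roles of $S$ and $T$. The backbone of the argument is a Fubini-type identity: for $\eta,\chi\in\dot{\mathcal{B}}^{\{M_p\}}(\RR^d)$,
\[
\langle S\otimes T,\varphi(x+y)\eta(x)\chi(y)\rangle = \langle S_x,\eta(x)\langle T_y,\varphi(x+y)\chi(y)\rangle\rangle = \langle S,\eta\cdot(\varphi*(\check{\chi}\check{T}))\rangle,
\]
where the inner pairing equals $(\varphi*(\check{\chi}\check{T}))(x)$ because the reflection of $\chi(y)T(y)$ is $\check{\chi}(y)\check{T}(y)$. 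Formally, letting $\chi\to 1$ turns the right-hand side into $\langle(\varphi*\check{T})S,\eta\rangle$ and the left-hand side into $\langle(S\otimes T)\varphi^{\Delta},\eta\otimes 1\rangle$, which is precisely the pairing through which \eqref{motiv} will be realised.

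For the direction $(i)\Rightarrow(ii)$, I would fix $\varphi$ and set $U:=(S\otimes T)\varphi^{\Delta}\in\DD'^{\{M_p\}}_{L^1}(\RR^{2d})$, then choose an approximating sequence $\chi_n\in\dot{\mathcal{B}}^{\{M_p\}}(\RR^d)$ with $\chi_n\to 1$ in the $\DD^{\{M_p\}}_{L^\infty,c}(\RR^d)$-topology; such a sequence exists by the density of $\dot{\mathcal{B}}^{\{M_p\}}$ in $\DD^{\{M_p\}}_{L^\infty,c}$ recorded in the preliminaries. For each $\eta\in\dot{\mathcal{B}}^{\{M_p\}}(\RR^d)$ the tensor products $\eta\otimes\chi_n$ converge to $\eta\otimes 1$ in $\DD^{\{M_p\}}_{L^\infty,c}(\RR^{2d})$, and the continuity of $U$ in that topology gives $\langle U,\eta\otimes\chi_n\rangle\to\langle U,\eta\otimes 1\rangle$. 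Combined with the Fubini identity and the convergence $\varphi*(\check{\chi}_n\check{T})\to\varphi*\check{T}$ as multipliers on $\SSS^{\{M_p\}}_{\{p!^{1/q}\}}(\RR^d)$, this would show that $\eta\mapsto\langle S,\eta(\varphi*\check{T})\rangle$ is bounded on $\dot{\mathcal{B}}^{\{M_p\}}(\RR^d)$, and hence that $(\varphi*\check{T})S\in\DD'^{\{M_p\}}_{L^1}(\RR^d)$.

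For $(ii)\Rightarrow(i)$, my plan is to exploit the nuclearity of the Gelfand--Shilov classes: finite sums of tensors $\eta\otimes\chi$ with $\eta,\chi\in\dot{\mathcal{B}}^{\{M_p\}}(\RR^d)$ are dense in $\dot{\mathcal{B}}^{\{M_p\}}(\RR^{2d})$. On this dense subspace the Fubini identity defines a candidate value for $\langle U,\cdot\rangle$; hypothesis (ii) supplies separate continuity in $\eta$ for each $\chi$, and a uniformity argument must then upgrade this to the joint continuity required by the $\dot{\mathcal{B}}^{\{M_p\}}(\RR^{2d})$-topology. Once the extension is obtained, it agrees with $(S\otimes T)\varphi^{\Delta}$ on the dense subspace, hence everywhere, giving (i). Identity \eqref{motiv} would then follow by running the approximation again with $\eta=1$ and invoking the bidual identification $\DD^{\{M_p\}}_{L^\infty}\simeq(\DD'^{\{M_p\}}_{L^1})'_\beta$ to pair $U$ against the constant function.

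I expect the main obstacle to be this joint-continuity upgrade in $(ii)\Rightarrow(i)$: separate continuity on tensor products is essentially formal, but controlling the candidate functional in the $\dot{\mathcal{B}}^{\{M_p\}}(\RR^{2d})$-topology requires a bounded-multiplier estimate for $\{\varphi*(\check{\chi}\check{T})\}$ that is uniform in $\chi$ ranging over bounded sets of $\dot{\mathcal{B}}^{\{M_p\}}(\RR^d)$. This is where the $(DF)$-structure of $\dot{\mathcal{B}}^{\{M_p\}}$ and the barreledness of the spaces described in the preliminaries will have to be used in an essential way.
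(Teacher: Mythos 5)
First, a point of comparison: the paper contains no proof of this theorem at all --- it is quoted verbatim from \cite[Theorem 5.8 and Remark 5.9]{BSV} --- so your proposal can only be judged on its own terms, and on those terms it has two genuine gaps. The first is that the ``Fubini identity'' on which the whole plan rests is not well defined as written. For $\eta,\chi\in\dot{\mathcal{B}}^{\{M_p\}}(\RR^d)$ the function $\varphi(x+y)\eta(x)\chi(y)$ has no decay in the $x-y$ direction, hence does not belong to $\SSS^{\{M_p\}}_{\{p!^{1/q}\}}(\RR^{2d})$; the left-hand pairing $\langle S\otimes T,\varphi(x+y)\eta(x)\chi(y)\rangle$ therefore only makes sense through the $\DD'^{\{M_p\}}_{L^1}$--$\DD^{\{M_p\}}_{L^{\infty},c}$ duality, i.e.\ after assuming $(i)$, while the right-hand term $\langle S,\eta\cdot(\varphi*(\check{\chi}\check{T}))\rangle$ presupposes $(\varphi*(\check{\chi}\check{T}))S\in\DD'^{\{M_p\}}_{L^1}(\RR^d)$, a variant of $(ii)$. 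The interchange of the two partial evaluations is precisely the nontrivial content of the theorem (commutativity of the partial convolutions), so taking it as the starting point is circular. The identity is unproblematic only when at least one of $\eta,\chi$ lies in $\SSS^{\{M_p\}}_{\{p!^{1/q}\}}(\RR^d)$ (note that for $q\leq 1$ one has $|y|^{q}\leq|x|^{q}+|x+y|^{q}$, so $\varphi^{\Delta}\cdot(\eta\otimes 1)$ with $\eta\in\SSS^{\{M_p\}}_{\{p!^{1/q}\}}(\RR^d)$ is a genuine test function on $\RR^{2d}$); your argument for $(i)\Rightarrow(ii)$ could plausibly be repaired along those lines.

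The second, more serious gap is that $(ii)\Rightarrow(i)$ is not actually proved. You flag the passage from separate continuity on tensors $\eta\otimes\chi$ to continuity for the topology of $\dot{\mathcal{B}}^{\{M_p\}}(\RR^{2d})$ as ``the main obstacle,'' but that upgrade \emph{is} the mathematical content of the implication, and no mechanism for it is offered; invoking the $(DF)$-structure and barrelledness is a hope, not an argument. Moreover, hypothesis $(ii)$ gives $(\varphi*\check{T})S\in\DD'^{\{M_p\}}_{L^1}(\RR^d)$, not $(\varphi*(\check{\chi}\check{T}))S\in\DD'^{\{M_p\}}_{L^1}(\RR^d)$ for every $\chi\in\dot{\mathcal{B}}^{\{M_p\}}(\RR^d)$, so even the candidate functional on your dense subspace of finite sums of tensors is not supplied by the hypothesis as stated (this is avoidable by restricting to $\eta,\chi\in\SSS^{\{M_p\}}_{\{p!^{1/q}\}}(\RR^d)$, but then the required uniform estimate remains untouched). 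In \cite{BSV} this implication is obtained by quantitative means tied to the parametrix method and the structure of $\DD'^{\{M_p\}}_{L^1}$, not by an abstract density-plus-barrelledness argument. As it stands, the proposal is a reasonable outline of the standard strategy with the two decisive steps missing.
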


Our goal is to study the convolvability with $e^{s\langle \cdot\rangle^q}$, $q\geq 1$. We will adopt Theorem \ref{krclvn137} $(ii)$ as a definition for the existence of the  convolution with $e^{s\langle \cdot\rangle^q}$. More precisely:

\begin{definition}\label{defgen}
Let $q_1>q\geq 1$ and $s\in\RR\backslash\{0\}$. The ultradistribution
$S\in {\mathcal S}'^{\{M_p\}}_{\{p!^{1/q_1}\}}(\mathbb R^d)$ admits a $\SSS'^{\{M_p\}}_{\{p!^{1/q_1}\}}$-convolution with the function $e^{s \langle\cdot\rangle^q}$ if $(\varphi\ast e^{s \langle\cdot\rangle^q})S\in {\mathcal D}'^{\{M_p\}}_{L^1}(\mathbb R^d)$, for all
$\varphi\in {\mathcal S}^{\{M_p\}}_{\{p!^{1/q_1}\}}(\mathbb R^d)$. When this is the case, $S*e^{s \langle\cdot\rangle^q}\in\SSS'^{\{M_p\}}_{\{p!^{1/q_1}\}}(\RR^d)$ is defined by
\beq\label{def-con-wit-f}
\langle S\ast e^{s\langle \cdot\rangle^q},\varphi\rangle:=
{}_{\DD'^{\{M_p\}}_{L^1}(\RR^d)}\langle (\varphi*e^{s\langle \cdot\rangle^q})S,1\rangle_{\DD^{\{M_p\}}_{L^{\infty},c}(\RR^d)},\quad \varphi\in\SSS^{\{M_p\}}_{\{p!^{1/q_1}\}}(\RR^d).
\eeq
\end{definition}

To see that \eqref{def-con-wit-f} indeed defines a continuous functional on $\SSS^{\{M_p\}}_{\{p!^{1/q_1}\}}(\RR^d)$, it is enough to prove that the mapping
\beq\label{map-for-con-cc}
\varphi\mapsto (\varphi*e^{s\langle \cdot\rangle^q})S,\quad \SSS^{\{M_p\}}_{\{p!^{1/q_1}\}}(\RR^d)\rightarrow \DD'^{\{M_p\}}_{L^1}(\RR^d),
\eeq
is continuous (as $\DD'^{\{M_p\}}_{L^1}(\RR^d)$ is the strong dual of $\DD^{\{M_p\}}_{L^{\infty},c}(\RR^d)$; see the comments before this section). Clearly $\varphi\mapsto (\varphi*e^{s\langle x\rangle^q})S$, $\SSS^{\{M_p\}}_{\{p!^{1/q_1}\}}(\RR^d)\rightarrow \SSS'^{\{M_p\}}_{\{p!^{1/q_1}\}}(\RR^d)$, is continuous and thus has a closed graph. As $(\varphi\ast e^{s \langle\cdot\rangle^q})S\in {\mathcal D}'^{\{M_p\}}_{L^1}(\mathbb R^d)$, $\forall \varphi\in\SSS^{\{M_p\}}_{\{p!^{1/q_1}\}}(\RR^d)$, its graph is closed in $\SSS^{\{M_p\}}_{\{p!^{1/q_1}\}}(\RR^d)\times \DD'^{\{M_p\}}_{L^1}(\RR^d)$. Now the Ptak closed graph theorem \cite[Theorem 8.5, p. 166]{Sch} implies that \eqref{map-for-con-cc} is continuous ($\DD'^{\{M_p\}}_{L^1}(\RR^d)$ is a Ptak space since it is an $(F)$-space; see \cite[Section 4.8, p. 162]{Sch}).

\begin{remark}
Let $q_1>q\geq 1$ and $s>0$. If $S$ is a measurable function which satisfies $e^{k\langle \cdot\rangle^q}S\in L^{\infty}(\RR^d)$, $\forall k>0$, then the ordinary convolution of $e^{k\langle \cdot\rangle^q}$ and $S$ exists. It is straightforward to verify that the $\SSS'^{\{M_p\}}_{\{p!^{1/q_1}\}}$-convolution also exists and they are the same. Similarly, when $s<0$, if $S$ is a measurable function such that $e^{-\varepsilon\langle \cdot\rangle^q}S\in L^{\infty}(\RR^d)$, $\forall \varepsilon>0$, then both the ordinary and the $\SSS'^{\{M_p\}}_{\{p!^{1/q_1}\}}$-convolution of $S$ and $e^{s\langle \cdot\rangle^q}$ exist and they coincide.
\end{remark}

\section{Sufficient conditions for the convolution with $e^{s\langle \cdot\rangle^q}$, $q\geq 1$, $s\in\RR\backslash\{0\}$}\label{suff-cond-ee}

The aim of this section is to give a sufficient condition for an ultradistribution $S\in\SSS'^{\{M_p\}}_{\{p!^{1/q_1}\}}(\RR^d)$ to be $\SSS'^{\{M_p\}}_{\{p!^{1/q_1}\}}$-convolvable with $e^{s\langle \cdot\rangle^q}$, where $q_1>q\geq 1$ and $s\in\RR\backslash\{0\}$.\\
\indent Before we start, we recall the following multidimensional variant of the Fa\`a di Bruno formula.

\begin{proposition}(\cite[Corollary 2.10]{Faa})\label{faadibruno}
Let $|\alpha|=n\geq 1$ and $h(x_1,...,x_d)=f(g(x_1,...,x_d))$ with $g\in \mathcal{C}^{n}$ in a neighbourhood of $x^0$ and $f\in\mathcal{C}^n$ in a neighbourhood of $y^0=g(x^0)$. Then
\beqs
\partial^{\alpha}h(x^0)=\sum_{r=1}^{n}f^{(r)}(y^0)\sum_{p(\alpha,r)}\alpha!\prod_{j=1}^n \frac{\left(\partial^{\alpha^{(j)}}g(x^0)\right)^{k_j}}{k_j! \left(\alpha^{(j)}!\right)^{k_j}},
\eeqs
where
\beqs
p(\alpha,r)&=&\Big\{\left(k_1,...,k_n; \alpha^{(1)},...,\alpha^{(n)}\right)\Big|\, \mbox{for some}\, 1\leq s\leq n, k_j=0 \mbox{ and } \alpha^{(j)}=0\\
&{}&\mbox{for } 1\leq j\leq n-s;\, k_j>0 \mbox{ for } n-s+1\leq j\leq n; \mbox{ and}\\
&{}&0\prec \alpha^{(n-s+1)}\prec...\prec \alpha^{(n)} \mbox{ are such that}\\
&{}&\sum_{j=1}^n k_j=r,\, \sum_{j=1}^n k_j\alpha^{(j)}=\alpha\Big\}.
\eeqs
\end{proposition}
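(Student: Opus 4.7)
The natural strategy is induction on $n = |\alpha|$. The base case $n=1$ reduces to the ordinary chain rule $\partial_i h(x^0) = f'(g(x^0))\,\partial_i g(x^0)$, which matches the right-hand side: the set $p(e_i,1)$ contains a single element, namely $k_n = 1$, $\alpha^{(n)} = e_i$, giving the trivial factor $e_i!/(1!\,(e_i!)^1) = 1$.

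For the inductive step, choose an index $i$ with $\alpha_i\geq 1$, set $\beta = \alpha - e_i$, and write $\partial^\alpha h = \partial_i(\partial^\beta h)$. Applying the inductive hypothesis to $\partial^\beta h$ and then differentiating with respect to $x_i$ produces two kinds of contributions for each partition $(k_1,\ldots,k_{n-1};\beta^{(1)},\ldots,\beta^{(n-1)}) \in p(\beta,r)$: first, the outer factor $f^{(r)}(g)$ contributes $f^{(r+1)}(g)\,\partial_i g$, which corresponds to enlarging the partition by an additional single-element block of size $e_i$ (shifting $r\mapsto r+1$); second, the product rule applied to $\prod_j (\partial^{\beta^{(j)}} g)^{k_j}$ yields, for each $j$, a factor $k_j(\partial^{\beta^{(j)}}g)^{k_j-1}\partial^{\beta^{(j)}+e_i}g$, which corresponds to enlarging the $j$-th block by $e_i$. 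Summing all such contributions and relabeling, one recognizes the right-hand side evaluated at $\alpha$ and $r$ (or $r+1$).

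The main obstacle is the combinatorial bookkeeping, i.e.\ verifying that the weights $\alpha!/(k_j!\,(\alpha^{(j)}!)^{k_j})$ emerge correctly after regrouping. The subtle points are: (a) when the differentiation promotes one $\beta^{(j)}$-block to a $\beta^{(j)}+e_i$-block, the resulting multi-indices may no longer be strictly ordered with respect to $\prec$, so one must re-sort the blocks and track how the factor $k_j$ interacts with the symmetry groups attached to blocks of equal size; (b) the passage from $\beta! = \alpha!/\alpha_i$ to $\alpha!$ must be balanced against the factor $\alpha_i$ generated by $\partial_i$ acting on the $i$-th component of a multi-index. Once one observes that each partition in $p(\alpha,r)$ arises a number of times equal to the number of distinct ways of removing a single copy of $e_i$ from one of its blocks, and that this count reproduces exactly the corrective factor needed, the induction closes.

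An alternative approach, avoiding the induction entirely, is to combine the one-dimensional Fa\`a di Bruno formula with the polarisation identity $\partial^\alpha = \alpha!\,[\xi^\alpha]\exp(\xi\cdot\nabla)$ (a formal power series identity), applied to $f(g(x^0 + t\xi))$ and expanded in $t$. This reduces the multidimensional statement to the classical scalar Fa\`a di Bruno identity together with multinomial expansions of $\partial^{\alpha^{(j)}}g$; the combinatorial identities on $p(\alpha,r)$ then follow from standard set-partition enumeration. Either route leads to the stated formula; the induction is more elementary, the polarisation argument more conceptual.
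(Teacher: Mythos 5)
The paper offers no proof of this proposition at all: it is imported verbatim from Constantine--Savits \cite[Corollary 2.10]{Faa}, so there is nothing in the paper to compare your argument against line by line. Judged on its own terms, your inductive strategy is the standard elementary route to the multivariate Fa\`a di Bruno formula and the skeleton is sound: the base case is right, and the two kinds of contributions in the step (a new singleton block $e_i$ raising $r$ to $r+1$, versus the promotion of an existing block $\alpha^{(j)}$ to $\alpha^{(j)}+e_i$) are exactly the correct decomposition. What you have written, however, is a sketch rather than a proof: the entire content of the statement lives in the combinatorial verification that you flag as ``the main obstacle'' and then dispatch with the single assertion that the multiplicity with which a partition of $\alpha$ arises from partitions of $\alpha-e_i$ ``reproduces exactly the corrective factor needed.'' That assertion is true --- the key identity is $\sum_j k_j\alpha^{(j)}_i=\alpha_i$, which supplies precisely the factor converting $\beta!=\alpha!/\alpha_i$ into $\alpha!$, together with the bookkeeping of how the factor $k_j$ cancels against the change $k_j!\mapsto(k_j-1)!$ and $(k_{j'}+1)!$ in the symmetry weights when a block migrates between equivalence classes under $\prec$ --- but it is exactly the part that must be written out for the proof to stand. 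It is also worth noting that your ``alternative, more conceptual'' route via Taylor expansion and coefficient extraction is in fact the method actually used in the cited reference \cite{Faa}, so the roles of ``main argument'' and ``alternative'' are reversed relative to the source.
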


In the above formula, the convention $0^0=1$ is used. The relation $\prec$ used in this proposition is defined in the following way (cf. \cite{Faa}). We say that $\beta\prec\alpha$ when one of the following holds:
\begin{itemize}
\item[$(i)$] $|\beta|<|\alpha|$;
\item[$(ii)$] $|\beta|=|\alpha|$ and $\beta_1<\alpha_1$;
\item[$(iii)$] $|\beta|=|\alpha|$, $\beta_1=\alpha_1,...,\beta_k=\alpha_k$ and $\beta_{k+1}<\alpha_{k+1}$ for some $1\leq k<d$.
\end{itemize}

We need precise estimates for the growth of the derivatives of $e^{s\langle \cdot\rangle^q}$. For this purpose we first give a bound for the derivatives of $\langle \cdot\rangle^q$.

\begin{lemma}\label{skupa}
For every $q\in\RR$ there exists $C\geq 1$ such that
\begin{equation}\label{r-ta}
|\partial^{\alpha} \langle x\rangle^q|\leq   C^{|\alpha|}
 \alpha! \langle x\rangle^{q-|\alpha|},\quad \mbox{for all}\,\, x\in\RR^d,\, \alpha\in\NN^d.
\end{equation}
\end{lemma}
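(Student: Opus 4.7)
The plan is to apply the Fa\`a di Bruno formula (Proposition \ref{faadibruno}) to the composition $\langle x\rangle^q = f(g(x))$ with $f(y) = y^{q/2}$ and $g(x) = 1 + |x|^2$, exploiting the fact that $g$ is a quadratic polynomial so that almost all of its derivatives vanish.

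The decisive observation is that $\partial^{\alpha^{(j)}} g = 0$ for $|\alpha^{(j)}| \geq 3$, and even at order $2$ the mixed partials vanish, so only $\partial^{e_i} g = 2 x_i$ and $\partial^{2 e_i} g = 2$ survive. Every tuple in $p(\alpha, r)$ with nonzero contribution is therefore encoded by nonnegative integers $a_i, b_i$ ($i = 1, \ldots, d$), representing the multiplicities of $e_i$ and $2 e_i$ respectively, subject to $a_i + 2 b_i = \alpha_i$ and $r = |\alpha| - \sum_i b_i$. Using $e_i! = 1$ and $(2 e_i)! = 2$, the corresponding Fa\`a di Bruno summand reduces to
\begin{equation*}
\alpha! \, f^{(r)}(\langle x\rangle^2) \prod_{i=1}^d \frac{(2 x_i)^{a_i}}{a_i!} \prod_{i=1}^d \frac{1}{b_i!}.
\end{equation*}

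Next I will bound $f^{(r)}$. Since $f^{(r)}(y) = r! \binom{q/2}{r} y^{q/2 - r}$, the elementary estimate $|\binom{q/2}{r}| \leq (1 + |q|/2)^r$ (which follows from $|q/2 - k|/(k+1) \leq 1 + |q|/2$) gives $|f^{(r)}(\langle x\rangle^2)| \leq r! (1 + |q|/2)^r \langle x\rangle^{q - 2r}$. Plugging this in and using $|x_i| \leq \langle x\rangle$ absorbs the factor $\prod (2 x_i)^{a_i}$ into $2^{|\alpha|} \langle x\rangle^{\sum a_i}$; the identity $-2r + \sum a_i = -|\alpha|$ then collapses the $\langle x\rangle$ powers to $\langle x\rangle^{q - |\alpha|}$, producing the desired factor $\alpha! \langle x\rangle^{q - |\alpha|}$ out front.

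It remains to bound the combinatorial sum $\sum_b (1 + |q|/2)^r \, r!/(\prod_i (\alpha_i - 2 b_i)! \, b_i!)$ over $b \in \NN^d$ with $b_i \leq \alpha_i/2$. After dropping the constraint $a_i + 2 b_i = \alpha_i$ and enlarging the index set, the multinomial theorem bounds the inner sum at fixed $r$ by $(2d)^r$, and summing the geometric progression in $r \leq |\alpha|$ yields $C^{|\alpha|}$ for a constant $C$ depending only on $q$ and $d$; the case $\alpha = 0$ is trivial. The only real bookkeeping hazard is to ensure that one recovers $\alpha!$ rather than $|\alpha|!$ in the final estimate, but this is automatic because the Fa\`a di Bruno formula carries $\alpha!$ explicitly and none of the subsequent estimates introduces a countervailing factorial.
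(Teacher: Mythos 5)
Your proof is correct, but it takes a genuinely different route from the paper. The paper applies the Fa\`a di Bruno formula to the composition $f(\rho)=\rho^q$ with $g(x)=\langle x\rangle$, which forces it to import two external ingredients: the Nicola--Rodino bound \eqref{prva1} on $\partial^{\beta}\langle x\rangle$ and the combinatorial estimate \eqref{est-for-the-com-of-faadib} from \cite[Lemma 7.4]{PP3} to control the sum over $p(\alpha,m)$. You instead compose $f(y)=y^{q/2}$ with the polynomial $g(x)=1+|x|^2$, so that only the derivatives $\partial^{e_i}g=2x_i$ and $\partial^{2e_i}g=2$ survive; the set $p(\alpha,r)$ then collapses to the explicit parametrization $a_i+2b_i=\alpha_i$, $r=|\alpha|-\sum_i b_i$, and the whole combinatorial sum is handled by the multinomial theorem with no outside input. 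Your exponent bookkeeping ($-2r+\sum_i a_i=-|\alpha|$, hence $\langle x\rangle^{q-2r+\sum a_i}=\langle x\rangle^{q-|\alpha|}$) and the binomial-coefficient bound $\bigl|\binom{q/2}{r}\bigr|\leq(1+|q|/2)^r$ both check out, and the count of admissible $(a,b)$ is at most $2^{|\alpha|}$, so everything is absorbed into $C^{|\alpha|}$. What your approach buys is self-containedness and elementarity --- as a byproduct it even reproves \eqref{prva1} by taking $q=1$. What the paper's approach buys is uniformity: the estimate \eqref{est-for-the-com-of-faadib} is needed anyway in Lemma \ref{est-for-gaus-perviou} and Lemma \ref{der-of-inv}, where the inner function ($\langle x\rangle^q$ or $g*e^{s\langle\cdot\rangle^q}$) is not a polynomial and your trick would not apply, so the paper simply reuses the same machinery in all three places.
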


\begin{proof} We apply the Fa\'a di Bruno formula (Proposition \ref{faadibruno}) with $f(\rho)=\rho^q$ and $g(x)=\langle x\rangle$ to infer, for $\alpha\in\NN^d\backslash\{0\}$,
\beqs
|\partial^{\alpha}\langle x\rangle^q|\leq \sum_{m=1}^{|\alpha|} \langle x\rangle^{q-m}\left(\prod_{j=0}^{m-1} |q-j|\right) \alpha!\sum_{p(\alpha,m)}\prod_{j=1}^{|\alpha|}\frac{|\partial^{\alpha^{(j)}}\langle x\rangle|^{k_j}} {k_j!(\alpha^{(j)}!)^{k_j}}.
\eeqs
Notice that
\beqs
\prod_{j=0}^{m-1} |q-j|\leq\prod_{j=0}^{m-1} (|q|+j)\leq (\lfloor |q|\rfloor+m)!\leq 2^{|q|+m}\lfloor|q|\rfloor!m!.
\eeqs
Recall the bound (see \cite[$(0.1.1)$, p. 10]{NR})
\begin{equation}\label{prva1}
|\partial^{\beta}\langle x\rangle|\leq
2^{|\beta|+1}|\beta|!\langle x\rangle^{1-|\beta|}\leq 2(2d)^{|\beta|}\beta!\langle x\rangle^{1-|\beta|},\quad \mbox{for all}\,\, x\in\mathbb R^d,\, \beta\in\NN^d.
\end{equation}
Hence, we deduce
\beqs
|\partial^{\alpha}\langle x\rangle^q|\leq 2^{|q|}(2d)^{|\alpha|}\lfloor|q|\rfloor!\alpha!\langle x\rangle^{q-|\alpha|} \sum_{m=1}^{|\alpha|} 4^m m! \sum_{p(\alpha,m)}\prod_{j=1}^{|\alpha|}\frac{1} {k_j!}.
\eeqs
Now, \cite[Lemma 7.4]{PP3} gives
\beq\label{est-for-the-com-of-faadib}
\sum_{m=1}^{|\alpha|} m!\sum_{p(\alpha,m)}\prod_{j=1}^{|\alpha|}\frac{1}{k_j!} \leq 2^{|\alpha|(d+1)}.
\eeq
Plugging this in the above estimate, we deduce \eqref{r-ta}.
\end{proof}

\begin{lemma}\label{est-for-gaus-perviou}
Let $s\in\RR\backslash\{0\}$. Then for every $q>0$ there exists $C\geq1$ such that
\beqs
|\partial^{\alpha}e^{s\langle x\rangle^q}|\leq C^{|\alpha|} \alpha!e^{s\langle x\rangle^q}\sum_{m=1}^{|\alpha|} \frac{\langle x\rangle^{qm-|\alpha|}}{m!},\quad \mbox{for all}\,\, x\in\RR^d,\, \alpha\in\NN^d\backslash\{0\}.
\eeqs
\end{lemma}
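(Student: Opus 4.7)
The plan is to apply the multidimensional Faà di Bruno formula (Proposition \ref{faadibruno}) with the outer function $f(\rho)=e^{s\rho}$ and the inner function $g(x)=\langle x\rangle^q$. Since $f^{(r)}(\rho)=s^r e^{s\rho}$, this yields, for $\alpha\neq 0$,
\begin{equation*}
\partial^{\alpha}e^{s\langle x\rangle^q}=e^{s\langle x\rangle^q}\sum_{r=1}^{|\alpha|}s^r\sum_{p(\alpha,r)}\alpha!\prod_{j=1}^{|\alpha|}\frac{\bigl(\partial^{\alpha^{(j)}}\langle x\rangle^q\bigr)^{k_j}}{k_j!\,(\alpha^{(j)}!)^{k_j}}.
\end{equation*}
The exponential $e^{s\langle x\rangle^q}$ comes out of the sum because it is the common value of $f^{(r)}(g(x))/s^r$ for every $r$.

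Next I would bound each inner derivative using Lemma \ref{skupa}: $|\partial^{\alpha^{(j)}}\langle x\rangle^q|\le C_1^{|\alpha^{(j)}|}\alpha^{(j)}!\,\langle x\rangle^{q-|\alpha^{(j)}|}$. Substituting this and using the two defining identities of the index set $p(\alpha,r)$, namely $\sum_j k_j=r$ and $\sum_j k_j\alpha^{(j)}=\alpha$, a direct multiplication collapses the product to
\begin{equation*}
\prod_{j=1}^{|\alpha|}\frac{\bigl(\partial^{\alpha^{(j)}}\langle x\rangle^q\bigr)^{k_j}}{k_j!\,(\alpha^{(j)}!)^{k_j}}\ \text{ bounded by }\ C_1^{|\alpha|}\,\langle x\rangle^{qr-|\alpha|}\prod_{j=1}^{|\alpha|}\frac{1}{k_j!}.
\end{equation*}
The crucial cancellation is that the factors $\alpha^{(j)}!$ coming from Lemma \ref{skupa} exactly cancel the $(\alpha^{(j)}!)^{k_j}$ in the denominator (after noting $(\alpha^{(j)}!)^{k_j}\ge\alpha^{(j)}!$ for $k_j\ge 1$), while the powers of $\langle x\rangle$ add up to $qr-|\alpha|$ since the exponents $q-|\alpha^{(j)}|$ are weighted by $k_j$.

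Then I would invoke the combinatorial estimate \eqref{est-for-the-com-of-faadib} in its sharper pointwise form $\sum_{p(\alpha,m)}\prod_j 1/k_j!\le 2^{|\alpha|(d+1)}/m!$, which follows from dropping the outer sum in \eqref{est-for-the-com-of-faadib}. Putting everything together gives
\begin{equation*}
|\partial^{\alpha}e^{s\langle x\rangle^q}|\le \alpha!\,(2^{d+1}C_1)^{|\alpha|}e^{s\langle x\rangle^q}\sum_{m=1}^{|\alpha|}|s|^m\,\frac{\langle x\rangle^{qm-|\alpha|}}{m!}.
\end{equation*}
Finally, $|s|^m\le \max(1,|s|)^{|\alpha|}$ for $1\le m\le |\alpha|$, so setting $C=2^{d+1}C_1\max(1,|s|)$ absorbs the $|s|^m$ into $C^{|\alpha|}$ and yields the desired estimate.

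I do not expect any genuine obstacle: the statement is essentially a mechanical combination of Faà di Bruno, Lemma \ref{skupa}, and the combinatorial bound \eqref{est-for-the-com-of-faadib}. The only item that requires a little care is the bookkeeping of the exponent $qr-|\alpha|$ of $\langle x\rangle$ and the cancellation of the factorials $\alpha^{(j)}!$ against $(\alpha^{(j)}!)^{k_j}$; this is precisely where the constraints $\sum_j k_j=r$ and $\sum_j k_j\alpha^{(j)}=\alpha$ are used.
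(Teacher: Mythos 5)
Your proof is correct and follows essentially the same route as the paper: Faà di Bruno with $f(\rho)=e^{s\rho}$ and $g=\langle x\rangle^q$, the bound from Lemma \ref{skupa} with the exact cancellation of the $(\alpha^{(j)}!)^{k_j}$ factors, and the termwise consequence $\sum_{p(\alpha,m)}\prod_j 1/k_j!\le 2^{|\alpha|(d+1)}/m!$ of \eqref{est-for-the-com-of-faadib} to produce the $1/m!$. The only cosmetic remark is that the parenthetical ``$(\alpha^{(j)}!)^{k_j}\ge\alpha^{(j)}!$'' is unnecessary, since Lemma \ref{skupa} raised to the power $k_j$ yields $(\alpha^{(j)}!)^{k_j}$ in the numerator and the cancellation is exact.
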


\begin{proof} Let $\alpha\in\NN^d\backslash\{0\}$. We apply the Fa\'a di Bruno formula (Proposition \ref{faadibruno}) with $f(\rho)=e^{s\rho}$ and $g(x)=\langle x\rangle^q$ together with Lemma \ref{skupa} to infer ($C'_q\geq 1$ is the constant from this lemma)
\beqs
|\partial^{\alpha}e^{s\langle x\rangle^q}|&\leq& \alpha!\sum_{m=1}^{|\alpha|} |s|^m e^{s\langle x\rangle^q} \sum_{p(\alpha,m)}\prod_{j=1}^{|\alpha|}\frac{C'^{|\alpha^{(j)}|k_j}_q\langle x\rangle^{qk_j-|\alpha^{(j)}|k_j}} {k_j!}\\
&\leq& \alpha!((|s|+1)C'_q)^{|\alpha|}e^{s\langle x\rangle^q} \sum_{m=1}^{|\alpha|} \langle x \rangle^{qm-|\alpha|}\sum_{p(\alpha,m)}\prod_{j=1}^{|\alpha|}\frac{1} {k_j!}.
\eeqs
Now, the claim in the lemma follows from \eqref{est-for-the-com-of-faadib}.
\end{proof}

As an immediate consequence we have the following result.

\begin{corollary}\label{est-for-the-ggauss}
Let $s\in\RR\backslash\{0\}$. Then for every $q>0$ there exists $C\geq1$ such that
\beqs
|\partial^{\alpha}e^{s\langle x\rangle^q}|\leq C^{|\alpha|} \alpha!e^{s\langle x\rangle^q+ \langle x\rangle^{q-1}},\quad \mbox{for all}\,\, x\in\RR^d,\, \alpha\in\NN^d.
\eeqs
\end{corollary}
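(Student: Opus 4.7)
The plan is to read off the corollary directly from Lemma \ref{est-for-gaus-perviou} by dominating the auxiliary sum $\sum_{m=1}^{|\alpha|}\langle x\rangle^{qm-|\alpha|}/m!$ by $e^{\langle x\rangle^{q-1}}$. This is essentially a one-line manipulation, which is why the statement is advertised as an immediate consequence.

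The key observation is to split the exponent as $qm-|\alpha|=(q-1)m+(m-|\alpha|)$. Since $\langle x\rangle\geq 1$ for every $x\in\RR^d$ and since $1\leq m\leq |\alpha|$ in the summation, the factor $\langle x\rangle^{m-|\alpha|}$ is bounded by $1$. Hence
$$
\sum_{m=1}^{|\alpha|}\frac{\langle x\rangle^{qm-|\alpha|}}{m!}\leq \sum_{m=1}^{|\alpha|}\frac{\left(\langle x\rangle^{q-1}\right)^m}{m!}\leq e^{\langle x\rangle^{q-1}},
$$
where the final inequality follows by comparison with the full Taylor expansion of $e^t$ at $t=\langle x\rangle^{q-1}>0$. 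Notice that this bound is valid for any $q>0$: when $q\geq 1$ the quantity $\langle x\rangle^{q-1}$ can be arbitrarily large, but the Taylor series comparison still applies; when $0<q<1$ the sum is in fact uniformly bounded (by $e-1$) and the estimate is only cosmetic.

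Plugging this into the conclusion of Lemma \ref{est-for-gaus-perviou} gives the desired bound with the same constant $C$, for every $\alpha\in\NN^d\setminus\{0\}$. The case $\alpha=0$ is trivial since $|e^{s\langle x\rangle^q}|=e^{s\langle x\rangle^q}\leq e^{s\langle x\rangle^q+\langle x\rangle^{q-1}}$ (as $\langle x\rangle^{q-1}>0$), so the same constant $C$ works uniformly in $\alpha$. There is no genuine obstacle; the step that deserves a moment of care is simply the factorisation $qm-|\alpha|=(q-1)m+(m-|\alpha|)$ together with the sign consideration $m-|\alpha|\leq 0$, which is what turns the polynomial growth in the Fa\`a di Bruno sum into the clean exponential factor $e^{\langle x\rangle^{q-1}}$.
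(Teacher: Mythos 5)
Your proof is correct and matches the paper's (unwritten) intended argument: the paper states the corollary as an immediate consequence of Lemma \ref{est-for-gaus-perviou}, and the factorisation $qm-|\alpha|=(q-1)m+(m-|\alpha|)$ with $\langle x\rangle\geq 1$, $m\leq|\alpha|$, followed by comparison with the exponential series, is exactly the one-line step being elided. Your handling of the case $\alpha=0$ and the remark on $0<q<1$ are both accurate.
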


Before we prove the main result of this section, we need a technical lemma.

\begin{lemma}\label{lemma-for-the-est-of-exp}
For all $x,t\in\RR^d$, $q\geq 1$, $s\in\RR\backslash\{0\}$, the following inequality holds true:
\beq\label{est-for-the-con-if-exs}
s\langle x-t\rangle^q-s\langle x\rangle^q\leq q2^{q-1}|s||t|\langle x\rangle^{q-1}+q2^{q-1}|s||t|\langle t\rangle^{q-1}.
\eeq
\end{lemma}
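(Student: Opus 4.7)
The plan is to reduce the two-sided bound to controlling $|\langle x-t\rangle^q - \langle x\rangle^q|$ and then invoking the mean value theorem applied to $u \mapsto u^q$ on $[0,\infty)$. First, I would note that since $s(\langle x-t\rangle^q - \langle x\rangle^q) \leq |s|\,|\langle x-t\rangle^q - \langle x\rangle^q|$ regardless of the sign of $s$, it suffices to prove
\begin{equation*}
|\langle x-t\rangle^q - \langle x\rangle^q| \leq q\,2^{q-1}|t|\bigl(\langle x\rangle^{q-1} + \langle t\rangle^{q-1}\bigr).
\end{equation*}

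For this, since $f(u)=u^q$ is $C^1$ on $(0,\infty)$ with $f'(u)=qu^{q-1}$ nondecreasing on $(0,\infty)$ when $q\geq 1$, the mean value theorem gives
\begin{equation*}
|\langle x-t\rangle^q - \langle x\rangle^q| \leq q\,\bigl(\max\{\langle x-t\rangle, \langle x\rangle\}\bigr)^{q-1}\,\bigl|\langle x-t\rangle - \langle x\rangle\bigr|.
\end{equation*}
Next I would estimate each of the two factors on the right. For the difference of the brackets, viewing $\langle y\rangle$ as the Euclidean norm of $(1,y)\in\RR^{d+1}$ and applying the reverse triangle inequality yields the $1$-Lipschitz bound
\begin{equation*}
\bigl|\langle x-t\rangle - \langle x\rangle\bigr| \leq |(1,x-t)-(1,x)| = |t|.
\end{equation*}
For the maximum, from $\langle x-t\rangle \leq \langle x\rangle + |t| \leq \langle x\rangle + \langle t\rangle$ one obtains $\max\{\langle x-t\rangle,\langle x\rangle\} \leq \langle x\rangle + \langle t\rangle$.

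Finally I would invoke the elementary inequality $(a+b)^{q-1} \leq 2^{q-1}(a^{q-1}+b^{q-1})$ valid for $a,b\geq 0$ and $q\geq 1$, which follows at once from $a+b\leq 2\max\{a,b\}$ and monotonicity of $u\mapsto u^{q-1}$ on $[0,\infty)$. Combining these three estimates yields exactly \eqref{est-for-the-con-if-exs}. There is no real obstacle here; the only mild subtlety is handling both signs of $s$ uniformly, which is precisely why the proof is phrased in terms of the absolute difference before multiplying by $|s|$.
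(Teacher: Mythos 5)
Your proof is correct and follows essentially the same route as the paper's: a mean value theorem estimate giving $q\max\{\langle x-t\rangle,\langle x\rangle\}^{q-1}\,|t|$ as a bound for $|\langle x-t\rangle^q-\langle x\rangle^q|$, followed by the inequality $(a+b)^{q-1}\leq 2^{q-1}(a^{q-1}+b^{q-1})$. The only cosmetic differences are that the paper applies the MVT to the scalar function $\lambda\mapsto\langle\lambda\rangle^q$ and splits into the cases $|x|\geq|x-t|$ and $|x|\leq|x-t|$, whereas you apply it to $u\mapsto u^q$ and absorb both cases into a single uniform bound.
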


\begin{proof} Let $\lambda_1,\lambda_2\geq0$. By the mean value theorem, there exists $\lambda'$ between $\lambda_1$ and $\lambda_2$ such that
\beq
s\langle \lambda_1\rangle^q-s\langle \lambda_2\rangle^q&=&sq(\lambda_1-\lambda_2)\lambda'\langle \lambda'\rangle^{q-2}\label{mean-val-the-for-the-gro} \\
&\leq& q|s||\lambda_1-\lambda_2|\langle \max\{\lambda_1,\lambda_2\}\rangle^{q-1}\label{est-for-the-nne}.
\eeq
We apply \eqref{est-for-the-nne} with $\lambda_1=|x-t|$ and $\lambda_2=|x|$. If $|x|\geq |x-t|$, we infer
\beqs
s\langle x-t\rangle^q-s\langle x\rangle^q\leq q|s||t|\langle x\rangle^{q-1}.
\eeqs
If $|x|\leq|x-t|$, then \eqref{est-for-the-nne} together with the inequality $\langle x-t\rangle^{q-1}\leq 2^{q-1}\langle x\rangle^{q-1}+2^{q-1}\langle t\rangle^{q-1}$ gives \eqref{est-for-the-con-if-exs}.
\end{proof}

\begin{theorem}\label{cond-for-exi-con-tt}
Let $q_1>q\geq 1$, $s\in\RR\backslash\{0\}$ and $S\in\SSS'^{\{M_p\}}_{\{p!^{1/{q_1}}\}}(\RR^d)$. If
\beq\label{cond-for-exist-of-conv}
e^{s\langle \cdot\rangle^q}e^{k\langle \cdot\rangle^{(q-1)q_1/(q_1-1)}} S\in \DD'^{\{M_p\}}_{L^1}(\RR^d),\quad \mbox{for all}\,\, k\geq 0,
\eeq
then the $\SSS'^{\{M_p\}}_{\{p!^{1/q_1}\}}$-convolution of $S$ and $e^{s\langle \cdot\rangle^q}$ exists.
\end{theorem}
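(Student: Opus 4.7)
The plan is to verify, for each $\varphi\in\SSS^{\{M_p\}}_{\{p!^{1/q_1}\}}(\RR^d)$, that $(\varphi*e^{s\langle\cdot\rangle^q})S\in\DD'^{\{M_p\}}_{L^1}(\RR^d)$, as required by Definition \ref{defgen}. Setting $r=(q-1)q_1/(q_1-1)$, the strategy is to factor
\[
(\varphi*e^{s\langle\cdot\rangle^q})(x) = \psi_k(x)\cdot e^{s\langle x\rangle^q+2k\langle x\rangle^r},
\]
for some $k>0$, where $\psi_k(x):=(\varphi*e^{s\langle\cdot\rangle^q})(x)e^{-s\langle x\rangle^q-2k\langle x\rangle^r}$, and then show $\psi_k\in\DD^{\{M_p\}}_{L^{\infty}}(\RR^d)$. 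Since elements of $\DD^{\{M_p\}}_{L^{\infty}}$ act as multipliers on $\DD'^{\{M_p\}}_{L^1}$, and the hypothesis with $k$ replaced by $2k$ yields $e^{s\langle\cdot\rangle^q+2k\langle\cdot\rangle^r}S\in\DD'^{\{M_p\}}_{L^1}(\RR^d)$, the conclusion will follow from $(\varphi*e^{s\langle\cdot\rangle^q})S=\psi_k\cdot\bigl[e^{s\langle\cdot\rangle^q+2k\langle\cdot\rangle^r}S\bigr]$.

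The first task is to estimate $\partial^{\alpha}(\varphi*e^{s\langle\cdot\rangle^q})(x)$. Differentiating under the integral sign and applying Corollary \ref{est-for-the-ggauss} produces the bound $|\partial^{\alpha}(\varphi*e^{s\langle\cdot\rangle^q})(x)|\le C^{|\alpha|}\alpha!\int|\varphi(t)|e^{s\langle x-t\rangle^q+\langle x-t\rangle^{q-1}}\,dt$. Combining Lemma \ref{lemma-for-the-est-of-exp} with the triangle-type estimate $\langle x-t\rangle^{q-1}\le 2^{q-1}(\langle x\rangle^{q-1}+\langle t\rangle^{q-1})$ gives an upper bound of the form $s\langle x\rangle^q+A(t)\langle x\rangle^{q-1}+B(t)$, where $A(t)=O(|t|+1)$ and $B(t)=O(\langle t\rangle^q)$. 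The key move is then Young's inequality applied with the conjugate pair $q_1,\,q_1/(q_1-1)$: introducing a parameter $\lambda>0$ and choosing it large enough, for any prescribed $k>0$ one gets
\[
A(t)\langle x\rangle^{q-1}\le C_{k}(|t|^{q_1}+1)+k\langle x\rangle^{r}.
\]
Since $\varphi$ decays at order $e^{-h|t|^{q_1}}$ for every $h>0$ and $q<q_1$ allows $\langle t\rangle^q$ to be absorbed into $\epsilon|t|^{q_1}$, the $t$-integral converges and yields
\[
|\partial^{\alpha}(\varphi*e^{s\langle\cdot\rangle^q})(x)|\le C^{|\alpha|}\alpha!\, M_{\varphi,k}\, e^{s\langle x\rangle^q+k\langle x\rangle^r}.
\]

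Next, Leibniz's rule applied to $\rho_k(x):=e^{-s\langle x\rangle^q-2k\langle x\rangle^r}$ together with Corollary \ref{est-for-the-ggauss} on each exponential factor (when $q=1$, $r=0$ and the second factor is constant, which is trivial) delivers $|\partial^{\alpha}\rho_k(x)|\le C^{|\alpha|}\alpha!\,\rho_k(x)\,e^{\langle x\rangle^{q-1}+\langle x\rangle^{r-1}}$. Combining this with the previous bound via Leibniz on $\psi_k=(\varphi*e^{s\langle\cdot\rangle^q})\rho_k$, and using that $\binom{\alpha}{\beta}\beta!(\alpha-\beta)!=\alpha!$, produces
\[
|\partial^{\alpha}\psi_k(x)|\le \tilde C^{|\alpha|}\alpha!\, M_{\varphi,k}\, e^{-k\langle x\rangle^r+\langle x\rangle^{q-1}+\langle x\rangle^{r-1}}.
\]
Since $r\ge q-1$ (strictly when $q>1$) and $r-1<r$, the exponential is uniformly bounded in $x$. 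Finally, condition $(M.6)$ converts $\alpha!$ into $C'L^{|\alpha|}M_{\alpha}$, so $\psi_k\in\DD^{M_p,h}_{L^{\infty}}(\RR^d)\subset\DD^{\{M_p\}}_{L^{\infty}}(\RR^d)$ for sufficiently small $h>0$, completing the argument.

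The main obstacle is arranging Young's inequality so that the $x$-side weight matches exactly $\langle x\rangle^{(q-1)q_1/(q_1-1)}$ appearing in the hypothesis: the match is dictated by the conjugate pair $(q_1,q_1/(q_1-1))$, which reflects the interplay between the $q_1$-type decay of test functions and the $(q-1)$-th order growth of $\partial\langle x\rangle^q$. A secondary point is verifying boundedness of the residual exponential $e^{-k\langle x\rangle^r+\langle x\rangle^{q-1}+\langle x\rangle^{r-1}}$, which uses $q_1>q$ to force $r\ge q-1$, with the borderline $q=1$ case handled by direct inspection.
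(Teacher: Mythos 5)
Your proposal follows essentially the same route as the paper's proof: bound $|\partial^{\alpha}(\varphi*e^{s\langle\cdot\rangle^q})(x)|$ by $C^{|\alpha|}\alpha!\,e^{s\langle x\rangle^q+c\langle x\rangle^{r}}$ with $r=(q-1)q_1/(q_1-1)$, then factor out $e^{s\langle\cdot\rangle^q+c'\langle\cdot\rangle^{r}}S$ and show the remaining factor lies in $\DD^{\{M_p\}}_{L^{\infty}}(\RR^d)$. Your use of Young's inequality with the conjugate pair $(q_1,q_1/(q_1-1))$ is an equivalent, slightly cleaner packaging of the paper's splitting of the $t$-integral into the regions $|t|\lessgtr(|x|/\lambda)^{(q-1)/(q_1-1)}$; both produce the same exponent $r$.

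One point is wrong as stated, though it does not sink the argument: a function $\varphi\in\SSS^{\{M_p\}}_{\{p!^{1/q_1}\}}(\RR^d)$ does \emph{not} decay like $e^{-h|t|^{q_1}}$ for \emph{every} $h>0$ — the space is an inductive limit as $h\rightarrow 0^{+}$, so the decay holds only for \emph{some} $h=\eta>0$ depending on $\varphi$. Consequently, you cannot take $k$ arbitrarily small: making $k$ small forces $\lambda$ large, hence $C_k$ large, and the $t$-integral $\int|\varphi(t)|e^{C_k|t|^{q_1}}\,dt$ diverges once $C_k>\eta$. What you actually obtain is the bound $|\partial^{\alpha}(\varphi*e^{s\langle\cdot\rangle^q})(x)|\leq C^{|\alpha|}\alpha!\,M\,e^{s\langle x\rangle^q+k\langle x\rangle^{r}}$ only for $k$ above some threshold depending on $\varphi$ (this is exactly the fixed constant $c_1$ appearing in the paper's proof). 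Since the hypothesis \eqref{cond-for-exist-of-conv} is assumed for all $k\geq 0$ and your factorisation only needs a single admissible $k$, the conclusion still follows; you should simply replace ``for any prescribed $k>0$'' by ``for some $k>0$ depending on $\varphi$''.
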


\begin{remark}
Notice that $e^{k\langle \cdot\rangle^{(q-1)q_1/(q_1-1)}}$ is a multiplier for $\SSS'^{\{M_p\}}_{\{p!^{1/q_1}\}}(\RR^d)$ because $(q-1)q_1/(q_1-1)<q<q_1$ and Corollary \ref{est-for-the-ggauss}.
\end{remark}

\begin{remark}
When $q>1$, \eqref{cond-for-exist-of-conv} is equivalent to the following seemingly weaker condition
\beq\label{equl-for-ex-conv-on-ss}
e^{s\langle \cdot\rangle^q}e^{k\langle \cdot\rangle^{(q-1)q_1/(q_1-1)}} S\in \SSS'^{\{M_p\}}_{\{p!^{(q_1-1)/(q_1(q-1))}\}}(\RR^d),\quad \mbox{for all}\,\, k\geq 0.
\eeq
To see this, denote $q'=(q-1)q_1/(q_1-1)<q_1$ and assume that \eqref{equl-for-ex-conv-on-ss} holds. As
\beqs
e^{s\langle \cdot\rangle^q}e^{k\langle \cdot\rangle^{q'}} S =e^{-\langle \cdot\rangle^{q'}}(e^{s\langle \cdot\rangle^q}e^{(k+1)\langle \cdot\rangle^{q'}} S),
\eeqs
and $e^{-\langle \cdot\rangle^{q'}}\in \SSS^{\{M_p\}}_{\{p!^{1/q'}\}}(\RR^d)$ (because of Corollary \ref{est-for-the-ggauss}) it is enough to prove that for every $\psi\in \SSS^{\{M_p\}}_{\{p!^{1/q'}\}}(\RR^d)$ and $T\in\SSS'^{\{M_p\}}_{\{p!^{1/q'}\}}(\RR^d)$, $\psi T\in\DD'^{\{M_p\}}_{L^1}(\RR^d)$. Since $\varphi\mapsto \psi\varphi$, $\DD^{\{M_p\}}_{L^{\infty}}(\RR^d)\rightarrow \SSS^{\{M_p\}}_{\{p!^{1/q'}\}}(\RR^d)$, is continuous, the mapping $\varphi\mapsto \langle T, \psi\varphi\rangle$, $\dot{\mathcal{B}}^{\{M_p\}}(\RR^d)\rightarrow \CC$, is continuous, and $\psi T$ coincides with it on the dense subspace $\SSS^{\{M_p\}}_{\{p!^{1/q'}\}}(\RR^d)$ of $\dot{\mathcal{B}}^{\{M_p\}}(\RR^d)$. Consequently, $\psi T\in\DD'^{\{M_p\}}_{L^1}(\RR^d)$.
\end{remark}

\begin{proof} Let $\varphi\in\SSS^{\{M_p\}}_{\{p!^{1/q_1}\}}(\RR^d)$ be arbitrary but fixed. There exists $C,\eta>0$ such that $|\varphi(x)|\leq Ce^{-\eta\langle x\rangle^{q_1}}$, $\forall x\in\RR^d$. Corollary \ref{est-for-the-ggauss} implies
\beqs
|D^{\alpha}\varphi*e^{s\langle\cdot\rangle^q}(x)|\leq CC_1^{|\alpha|}\alpha!e^{2^q\langle x\rangle^{q-1}}\int_{\RR^d}e^{-\eta\langle t\rangle^{q_1}}e^{s\langle x-t\rangle^q}e^{2^q\langle t\rangle^{q-1}}dt.
\eeqs
Denote $\lambda=(\eta/(4^qq|s|))^{1/(q-1)}$ when $q>1$ and $\lambda=1$ when $q=1$. Lemma \ref{lemma-for-the-est-of-exp} gives
\beqs
|D^{\alpha}\varphi*e^{s\langle\cdot\rangle^q}(x)|\leq CC_1^{|\alpha|}\alpha!e^{2^q\langle x\rangle^{q-1}}e^{s\langle x\rangle^q}(I_1+I_2),
\eeqs
where
\beqs
I_1&=&\int_{|t|\leq (|x|/\lambda)^{(q-1)/(q_1-1)}}e^{-\eta\langle t\rangle^{q_1}}e^{q2^{q-1}|s||t|\langle x\rangle^{q-1}} e^{q2^{q-1}|s|\langle t\rangle^q} e^{2^q\langle t\rangle^{q-1}}dt,\\
I_2&=&\int_{|t|\geq (|x|/\lambda)^{(q-1)/(q_1-1)}}e^{-\eta\langle t\rangle^{q_1}}e^{q2^{q-1}|s||t|\langle x\rangle^{q-1}} e^{q2^{q-1}|s|\langle t\rangle^q}e^{2^q\langle t\rangle^{q-1}}dt.
\eeqs
Notice that
\beqs
I_1\leq e^{q2^{q-1}|s|\lambda^{-(q-1)/(q_1-1)}\langle x\rangle^{(q-1)q_1/(q_1-1)}}\|e^{-\eta\langle \cdot\rangle^{q_1}} e^{q2^{q-1}|s|\langle \cdot\rangle^q} e^{2^q\langle \cdot\rangle^{q-1}}\|_{L^1(\RR^d)}.
\eeqs
To estimate $I_2$, let $|t|\geq (|x|/\lambda)^{(q-1)/(q_1-1)}$. When $q>1$, we infer
\beqs
|t|\langle x\rangle^{q-1}\leq 2^q|t|+2^q|t||x|^{q-1}\leq 2^q|t|+2^q\lambda^{q-1}|t|^{q_1},
\eeqs
which implies
\beqs
I_2\leq \|e^{-(\eta/2)\langle \cdot\rangle^{q_1}} e^{q2^{2q-1}|s|\langle \cdot\rangle} e^{q2^{q-1}|s|\langle \cdot\rangle^q} e^{2^q\langle \cdot\rangle^{q-1}}\|_{L^1(\RR^d)}.
\eeqs
When $q=1$, one immediately deduces $I_2\leq e^2 \|e^{-\eta\langle \cdot\rangle^{q_1}} e^{2|s|\langle \cdot\rangle}\|_{L^1(\RR^d)}$. Consequently, there exists $c_1>0$ such that
\beqs
|D^{\alpha}\varphi*e^{s\langle\cdot\rangle^q}(x)|\leq C_2C_1^{|\alpha|}\alpha!e^{s\langle x\rangle^q} e^{c_1\langle x\rangle^{(q-1)q_1/(q_1-1)}},\quad \mbox{for all}\,\, x\in\RR^d,\, \alpha\in\NN^d.
\eeqs
Corollary \ref{est-for-the-ggauss} now verifies that $e^{-s\langle \cdot\rangle^q} e^{-(c_1+1)\langle \cdot\rangle^{(q-1)q_1/(q_1-1)}}(\varphi*e^{s\langle\cdot\rangle^q})\in \DD^{\{M_p\}}_{L^{\infty}}(\RR^d)$, which, in turn, gives
\beqs
(\varphi*e^{s\langle\cdot\rangle^q})S=\left(e^{-s\langle \cdot\rangle^q} e^{-(c_1+1)\langle \cdot\rangle^{(q-1)q_1/(q_1-1)}}(\varphi*e^{s\langle\cdot\rangle^q})\right) \left(e^{s\langle \cdot\rangle^q} e^{(c_1+1)\langle \cdot\rangle^{(q-1)q_1/(q_1-1)}}S\right)\in \DD'^{\{M_p\}}_{L^1}(\RR^d).
\eeqs
\end{proof}

\begin{remark}
Notice that
\beqs
\SSS^{\{M_p\}}_{\{p!^{1/q_1}\}}(\RR^d)=\lim_{\substack{\longrightarrow\\ h\rightarrow 0^+}} \tilde{\SSS}^{M_p,h}_{p!^{1/q_1},h}(\RR^d),
\eeqs
where $\tilde{\SSS}^{M_p,h}_{p!^{1/q_1},h}(\RR^d)$ is the Banach space of all $\varphi\in\mathcal{C}^{\infty}(\RR^d)$ such that
\beqs
\sup_{\alpha,\beta\in\NN^d} h^{|\alpha|+|\beta|}\|\langle \cdot\rangle^{|\beta|}\partial^{\alpha}\varphi\|_{L^{\infty}(\RR^d)} /(M_{\alpha}\beta!^{1/q_1})<\infty.
\eeqs
Thus, if $\{M_p\}_{p\in\NN}$ satisfies the non-quasianalyticity condition $\sum_{p=1}^{\infty}M_p^{-1/p}<\infty$, one can consider the case when $q_1=\infty$ as well; in fact $\SSS^{\{M_p\}}_{\{p!^0\}}(\RR^d)$ is exactly $\DD^{\{M_p\}}(\RR^d)$, i.e. the space of compactly supported ultradifferentiable functions of class $\{M_p\}$ (see \cite{Komatsu1}). In this case (and when $\{M_p\}_{p\in\NN}$ additionally satisfies the condition $(M.3)$ from \cite{Komatsu1}), when $q=2$, the condition \eqref{cond-for-exist-of-conv}, which boils down to $e^{s\langle \cdot\rangle^2}e^{k\langle \cdot\rangle}S\in\DD'^{\{M_p\}}_{L^1}(\RR^d)$, $\forall k\geq 0$, is known to be both necessary and sufficient for the existence of the $\DD'^{\{M_p\}}$-convolution with the Gaussian $e^{s\langle \cdot\rangle^2}$; see \cite[Theorem 4.3 $(a)$]{pp}. (See also \cite{Wagner} for the necessary and sufficient conditions for existence of convolution with the Gaussian $e^{s\langle \cdot\rangle^2}$ in the setting of Schwartz distributions.)
\end{remark}

\section{Necessary conditions for the convolution with $e^{s\langle \cdot\rangle^q}$, $q\geq 1$, $s>0$}\label{neces-con-sdd}

We now turn our attention to finding necessary conditions for the existence of the $\SSS'^{\{M_p\}}_{\{p!^{1/q_1}\}}$-convolution with $e^{s\langle \cdot\rangle^q}$. We start with two technical results.

\begin{lemma}\label{lemma-on-lower-boun}
Let $q_1>q\geq 1$, $s\in\RR\backslash\{0\}$ and $g$ be a measurable function on $\RR^d$ which satisfies the following: there exist $\eta_1,\eta_2,C'_1,C'_2>0$ such that
\beq\label{measur-fun-est}
C'_1e^{-\eta_1\langle x\rangle^{q_1}}\leq g(x)\leq C'_2 e^{-\eta_2\langle x\rangle^{q_1}},\quad \mbox{a.e.}
\eeq
Then for every $k>0$ there exists $c,\varepsilon\in(0,1)$ such that
\beqs
(g(\varepsilon\, \cdot)*e^{s\langle\cdot\rangle^q})(x)\geq c e^{s\langle x\rangle^q}e^{k\langle x\rangle^{(q-1)q_1/(q_1-1)}},\quad \forall x\in\RR^d.
\eeqs
\end{lemma}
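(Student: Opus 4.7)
The plan is to bound the convolution
\[
(g(\varepsilon\cdot)\ast e^{s\langle\cdot\rangle^q})(x)=\int_{\RR^d}g(\varepsilon t)\,e^{s\langle x-t\rangle^q}\,dt
\]
from below by restricting the integration to a unit ball around a carefully chosen point $t_x\in\RR^d$. Setting $q'=(q-1)q_1/(q_1-1)$, the guiding observation is that $g(\varepsilon t)$ penalises large $|t|$ (because of its Gaussian-type decay), while $e^{s\langle x-t\rangle^q}$ favours $t$ antiparallel to $x$ when $s>0$ and parallel to $x$ when $s<0$; the optimal $|t|$ turns out to be of order $\langle x\rangle^{(q-1)/(q_1-1)}$, which is precisely the scale that produces the target factor $e^{k\langle x\rangle^{q'}}$.

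Concretely, for $x\neq 0$ I would take
\[
t_x=-\mathrm{sgn}(s)\,\Lambda\,\langle x\rangle^{(q-1)/(q_1-1)}\,\frac{x}{|x|},
\]
with $\Lambda>0$ to be fixed. Applying the mean value theorem to $u\mapsto(1+u^2)^{q/2}$, and using the monotonicity of the derivative $qu(1+u^2)^{q/2-1}$ on $[0,\infty)$ for $q\ge 1$ (equivalent to the convexity of $\langle\cdot\rangle^q$), one obtains, for $|x|$ large,
\[
\mathrm{sgn}(s)\bigl[\langle x-t_x\rangle^q-\langle x\rangle^q\bigr]\ \ge\ c_q\,\Lambda\,\langle x\rangle^{q'},
\]
with $c_q>0$ depending only on $q$. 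The same mean value estimate shows that the variation $|\langle x-t\rangle^q-\langle x-t_x\rangle^q|$ for $t\in B(t_x,1)$ is of order $\langle x\rangle^{q-1}$, which is negligible compared with $\Lambda\langle x\rangle^{q'}$ for large $|x|$ (and reduces to a constant, absorbed into $\Lambda$, in the borderline case $q=1$ where $q'=0$). Since $|t|\le 2\Lambda\langle x\rangle^{(q-1)/(q_1-1)}$ on $B(t_x,1)$, the hypothesised lower bound $g(x)\ge C_1'e^{-\eta_1\langle x\rangle^{q_1}}$ gives
\[
g(\varepsilon t)\ \ge\ A\,e^{-B\Lambda^{q_1}\varepsilon^{q_1}\langle x\rangle^{q'}},
\]
for some constants $A,B>0$ depending only on $C_1',\eta_1,q_1$.

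Multiplying these bounds, the integrand on $B(t_x,1)$ is bounded below by $A\exp\bigl(s\langle x\rangle^q+(|s|c_q\Lambda/2-B\Lambda^{q_1}\varepsilon^{q_1})\langle x\rangle^{q'}\bigr)$. Given $k>0$, I first fix $\Lambda$ so large that $|s|c_q\Lambda/2\ge 2k$ and then choose $\varepsilon\in(0,1)$ so small that $B\Lambda^{q_1}\varepsilon^{q_1}\le k$; integrating over $B(t_x,1)$ (a set of fixed volume) yields the claimed inequality for every $x$ with $|x|\ge R_0$, where $R_0$ is a threshold absorbing the lower-order corrections and (for $s<0$) ensuring $\Lambda\langle x\rangle^{(q-1)/(q_1-1)}\le|x|/2$. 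The remaining region $|x|\le R_0$ is easy: the convolution is continuous and everywhere strictly positive (since $g>0$ almost everywhere by its lower bound), hence bounded below by a positive constant on the compact set $\overline{B(0,R_0)}$, while the right-hand side is bounded above there; a further shrinking of $c$ completes the argument. The main obstacle is really the careful bookkeeping in the mean value estimate—producing the constant $c_q$ uniformly in both signs of $s$ and confirming that the perturbation within $B(t_x,1)$ does not erode the gain, most delicately in the borderline case $q=1$.
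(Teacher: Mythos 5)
Your proposal is correct and follows essentially the same route as the paper's proof: translate by a point of magnitude of order $\Lambda\langle x\rangle^{(q-1)/(q_1-1)}$, antiparallel to $x$ for $s>0$ and parallel for $s<0$, integrate over a unit ball around it, quantify the gain $\mathrm{sgn}(s)[\langle x-t\rangle^q-\langle x\rangle^q]\gtrsim\Lambda\langle x\rangle^{(q-1)q_1/(q_1-1)}$ via the mean value theorem, fix the shift size $\Lambda$ first and then $\varepsilon$ small, and dispose of bounded $|x|$ by positivity and continuity of the convolution. The only cosmetic differences are that the paper treats $q=1$ separately via $\langle x\rangle-\langle t\rangle\leq\langle x-t\rangle\leq\langle x\rangle+\langle t\rangle$ and bounds $|x-t|$ uniformly over the whole ball rather than perturbing around its centre.
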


\begin{proof} Notice first that when $q=1$ the claim immediately follows from
\beqs
\langle x\rangle -\langle t\rangle\leq \langle x-t\rangle\leq \langle x\rangle+\langle t\rangle,\quad \mbox{for all }\,\, x,t\in\RR^d.
\eeqs
So, throughout the rest of the proof we assume $q>1$; additionally, we denote by $B(x,r)$ the closed ball with centre at $x\in\RR^d$ and radius $r>0$. Assume first $s>0$ and let $k>0$ be arbitrary but fixed. Pick $k_1\geq 1+ 4(k+1)/(qs)$ and set $\varepsilon=1/(4^{q_1}(\eta_1+1)k_1)$. Let $x\in\RR^d$, $|x|\geq 2$, be arbitrary but fixed. For every $t\in B(-k_1|x|^{-1+(q-1)/(q_1-1)}x,1)$, we infer
\beqs
|x-t|&\geq& |x+k_1|x|^{-1+(q-1)/(q_1-1)}x|-|k_1|x|^{-1+(q-1)/(q_1-1)}x+t|\\
&\geq& |x|+k_1|x|^{(q-1)/(q_1-1)}-1\geq |x|+(k_1-1)|x|^{(q-1)/(q_1-1)}.
\eeqs
We apply the equality \eqref{mean-val-the-for-the-gro} with $\lambda_1=|x-t|$, $\lambda_2=|x|$. Because of the above, $\lambda_1>\lambda_2$ and thus $\lambda'\langle \lambda'\rangle^{q-2}\geq \langle x\rangle^{q-1}/2$. We infer
\beq\label{boun-for-the-epp}
s\langle x-t\rangle^q\geq s\langle x\rangle^q+(qs(k_1-1)/2)|x|^{(q-1)/(q_1-1)}\langle x\rangle^{q-1}\geq s\langle x\rangle^q+(k+1)\langle x\rangle^{(q-1)q_1/(q_1-1)},
\eeq
for all $t\in B(-k_1|x|^{-1+(q-1)/(q_1-1)}x,1)$. On the other hand, with $t$ as before, we have
\beqs
\eta_1\langle \varepsilon t\rangle^{q_1}\leq 2^{q_1}\eta_1\langle \varepsilon\rangle^{q_1}+2^{q_1}\eta_1\langle \varepsilon k_1|x|^{(q-1)/(q_1-1)}\rangle^{q_1}\leq 8^{q_1}\eta_1+\langle x\rangle^{(q-1)q_1/(q_1-1)}.
\eeqs
Thus, denoting by $\omega_d$ the measure of the unit ball in $\RR^d$, we deduce
\beq
(g(\varepsilon\, \cdot)*e^{s\langle\cdot\rangle^q})(x)&\geq& C'_1\int_{B(-k_1|x|^{-1+(q-1)/(q_1-1)}x,1)}e^{-\eta_1\langle \varepsilon t\rangle^{q_1}}e^{s\langle x-t\rangle^q}dt\nonumber \\
&\geq& \omega_d C'_1e^{-8^{q_1}\eta_1}e^{s\langle x\rangle^q}e^{k\langle x\rangle^{(q-1)q_1/(q_1-1)}}.\label{ine-for-co-fpt}
\eeq
Hence, the above holds for all $x\in\RR^d$, $|x|\geq 2$. As $g(\varepsilon\, \cdot)*e^{s\langle\cdot\rangle^q}$ is positive and continuous, this holds on all of $\RR^d$ as well possibly with a different constant in place of $\omega_d C'_1e^{-8^{q_1}\eta_1}$.\\
\indent Assume now $s<0$. Let $k>0$ be arbitrary but fixed. Pick $k_1\geq 1+4^q(k+1)/(q|s|)$ and set $\varepsilon=1/(4^{q_1}(\eta_1+1)k_1)$. There exists $c'\geq 4$ such that $k_1|x|^{-1+(q-1)/(q_1-1)}\leq 1/2$, for all $|x|\geq c'$. Let $x\in\RR^d$, $|x|\geq c'$, be arbitrary but fixed. For $t\in B(k_1|x|^{-1+(q-1)/(q_1-1)}x,1)$ we infer
\beqs
|x-t|\leq |x|(1-k_1|x|^{-1+(q-1)/(q_1-1)})+1=|x|-k_1|x|^{(q-1)/(q_1-1)}+1,
\eeqs
and thus $|x|\geq |x-t|+(k_1-1)|x|^{(q-1)/(q_1-1)}$. Furthermore
\beqs
|x-t|\geq |x|(1-k_1|x|^{-1+(q-1)/(q_1-1)})-1\geq |x|/2-1\geq |x|/4
\eeqs
(the very last inequality follows from $|x|\geq c'\geq 4$). We apply \eqref{mean-val-the-for-the-gro} with $\lambda_1=|x-t|$ and $\lambda_2=|x|$. Because of the above, $\lambda'\langle \lambda'\rangle^{q-2}\geq \langle x-t\rangle^{q-1}/2$ and consequently
\beqs
s\langle x-t\rangle^q -s\langle x\rangle^q&=&q|s|(|x|-|x-t|)\lambda'\langle \lambda'\rangle^{q-2}\geq q|s|((k_1-1)/2)|x|^{(q-1)/(q_1-1)}\langle x/4\rangle^{q-1}\\
&\geq& (k+1)\langle x\rangle^{(q-1)q_1/(q_1-1)},
\eeqs
for all $t\in B(k_1|x|^{-1+(q-1)/(q_1-1)}x,1)$. Now, similarly as above, one deduces the validity of \eqref{ine-for-co-fpt} for all $|x|\geq c'$. As $g(\varepsilon\, \cdot)*e^{s\langle\cdot\rangle^q}$ is positive and continuous this implies the claim in the lemma.
\end{proof}

\begin{lemma}\label{der-of-inv}
Let $q_1>q\geq 1$, $s>0$ and $g$ be a measurable function on $\RR^d$ which satisfies the following:
\begin{gather*}
\mbox{there exists}\,\,\, C',\eta'>0\,\,\, \mbox{such that}\,\,\, 0\leq g(x)\leq C' e^{-\eta'\langle x\rangle^{q_1}},\quad \mbox{a.e.}\\
\mathrm{measure}(\{x\in\RR^d|\, g(x)>0\})>0.
\end{gather*}
\begin{itemize}
\item[$(i)$] There exists $C>1$ such that
\beqs
|D^{\alpha}(g*e^{s\langle\cdot\rangle^q})(x)|\leq C^{|\alpha|+1}\alpha!\langle x\rangle^{(q-1)|\alpha|}(g*e^{s\langle\cdot\rangle^q})(x),\quad \mbox{for all}\,\, x\in\RR^d,\,\, \alpha\in\NN^d.
\eeqs
\item[$(ii)$] There exists $C>1$ such that
\beqs
|D^{\alpha}(1/(g*e^{s\langle\cdot\rangle^q})(x))|\leq C^{|\alpha|+1}\alpha!\langle x\rangle^{(q-1)|\alpha|}/(g*e^{s\langle\cdot\rangle^q})(x),\quad \mbox{for all}\,\, x\in\RR^d,\,\, \alpha\in\NN^d.
\eeqs
\end{itemize}
\end{lemma}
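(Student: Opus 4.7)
For part $(i)$, the plan is to differentiate under the integral and estimate pointwise. With $h:=g\ast e^{s\langle\cdot\rangle^q}$ and $f(y):=e^{s\langle y\rangle^q}$, Lemma \ref{est-for-gaus-perviou} together with the elementary observation $\sum_{m=1}^{|\alpha|}\langle y\rangle^{qm-|\alpha|}/m!\le e\,\langle y\rangle^{(q-1)|\alpha|}$ (valid because $\langle y\rangle\ge 1$, $q\ge 1$, and $qm-|\alpha|\le(q-1)|\alpha|$ for $1\le m\le|\alpha|$) yields
\[
|\partial^\alpha f(y)|\le C_1^{|\alpha|+1}\alpha!\,\langle y\rangle^{(q-1)|\alpha|}f(y),
\]
and hence $|D^\alpha h(x)|\le C_1^{|\alpha|+1}\alpha!\int g(t)\langle x-t\rangle^{(q-1)|\alpha|}f(x-t)\,dt$. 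I would then split the $t$-integral at $R_x:=\varepsilon\langle x\rangle^{(q-1)/(q_1-1)}$, choosing $\varepsilon>0$ large enough that $q2^{q-1}s/\varepsilon^{q_1-1}\le\eta'/4$. On $\{|t|\le R_x\}$ one has $R_x\le\varepsilon\langle x\rangle$ (since $(q-1)/(q_1-1)\le 1$), hence $\langle x-t\rangle\le(1+\sqrt{2}\,\varepsilon)\langle x\rangle$, and this piece is immediately dominated by $C_2^{|\alpha|}\langle x\rangle^{(q-1)|\alpha|}h(x)$. On $\{|t|>R_x\}$ I would apply Lemma \ref{lemma-for-the-est-of-exp} to get $f(x-t)\le f(x)\exp\bigl(q2^{q-1}s|t|\langle x\rangle^{q-1}+q2^{q-1}s\langle t\rangle^q\bigr)$; the choice of $\varepsilon$ absorbs the cross term into $\eta'\langle t\rangle^{q_1}/4$, while Young's inequality absorbs the $\langle t\rangle^q$ term into $\eta'\langle t\rangle^{q_1}/4+C_3$, leaving $g(t)f(x-t)\le C_4\,f(x)\,e^{-\eta'\langle t\rangle^{q_1}/2}$ on this region. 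The remaining polynomial factor $\langle x-t\rangle^{(q-1)|\alpha|}\le 2^{(q-1)|\alpha|}(\langle x\rangle^{(q-1)|\alpha|}+\langle t\rangle^{(q-1)|\alpha|})$ is integrated against $e^{-\eta'\langle t\rangle^{q_1}/2}$ using the standard estimate for $\int\langle t\rangle^N e^{-\eta'\langle t\rangle^{q_1}/2}dt$. Finally, the positive measure support of $g$ yields a lower bound $h(x)\ge c\,f(x)\,e^{-C_0\langle x\rangle^{q-1}}$ (obtained by restricting the convolution integral to a small ball on which $g\ge c'>0$ and invoking Lemma \ref{lemma-for-the-est-of-exp}), which converts the residual $f(x)$ factor into $h(x)$.

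For part $(ii)$, I would apply the Fa\`a di Bruno formula (Proposition \ref{faadibruno}) with outer function $\phi(\rho)=1/\rho$, noting $\phi^{(r)}(\rho)=(-1)^r r!/\rho^{r+1}$, and inner function $h$. This yields
\[
D^\alpha(1/h)(x)=\sum_{r=1}^{|\alpha|}\frac{(-1)^r r!}{h(x)^{r+1}}\sum_{p(\alpha,r)}\alpha!\prod_{j=1}^{|\alpha|}\frac{(\partial^{\alpha^{(j)}}h(x))^{k_j}}{k_j!\,(\alpha^{(j)}!)^{k_j}}.
\]
Plugging in the bound from $(i)$ for each $\partial^{\alpha^{(j)}}h$, and using $\sum_j k_j|\alpha^{(j)}|=|\alpha|$ together with $\sum_j k_j=r$, the $(\alpha^{(j)}!)^{k_j}$ factors cancel, the products $\prod_j h(x)^{k_j}$ collapse to $h(x)^r$ (cancelling all but one of the $1/h(x)$ factors), and $\prod_j\langle x\rangle^{(q-1)|\alpha^{(j)}|k_j}=\langle x\rangle^{(q-1)|\alpha|}$. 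The combinatorial estimate \eqref{est-for-the-com-of-faadib} then handles the remaining sum over partitions, producing the claimed bound.

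The main technical obstacle is in step $(i)$, controlling the tail contribution over $\{|t|>R_x\}$. The integral $\int\langle t\rangle^{(q-1)|\alpha|}e^{-\eta'\langle t\rangle^{q_1}/2}dt$ that arises there is of order $\alpha!^{(q-1)/q_1}$; although this is dominated by $\alpha!$ since $(q-1)/q_1<1$, it must fit into the precise slot $\alpha!\langle x\rangle^{(q-1)|\alpha|}$ on the right-hand side, and ensuring that this absorption is uniform in both $x$ and $\alpha$ depends crucially on the precise tuning of $\varepsilon$ (so that on the tail $|t|\langle x\rangle^{q-1}$ is swallowed by $\langle t\rangle^{q_1}$) together with the $f(x)$-to-$h(x)$ comparison coming from the positive measure support of $g$.
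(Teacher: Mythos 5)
Your part $(ii)$ is essentially the paper's argument (Fa\`a di Bruno with $\rho\mapsto 1/\rho$ plus \eqref{est-for-the-com-of-faadib}) and is fine once $(i)$ is available. The gap is in part $(i)$, exactly at the point you yourself flag as the main obstacle: the tail estimate does not close. Once you replace the sum $\sum_{m=1}^{|\alpha|}\langle y\rangle^{qm-|\alpha|}/m!$ by $e\,\langle y\rangle^{(q-1)|\alpha|}$ \emph{before} splitting the integral, you must integrate $\langle t\rangle^{(q-1)|\alpha|}$ against $e^{-\eta'\langle t\rangle^{q_1}/2}$ on the outer region, and that integral is of order $C^{|\alpha|}|\alpha|!^{(q-1)/q_1}$. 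For $q>1$ this is super-geometric and cannot be absorbed into the slot $C^{|\alpha|+1}\alpha!\langle x\rangle^{(q-1)|\alpha|}$ uniformly: take $x$ in a bounded set, where $\langle x\rangle^{(q-1)|\alpha|}=O(C^{|\alpha|})$ but the outer region still contains a fixed set $\{|t|>\varepsilon'\}$, so no tuning of $\varepsilon$ helps. The second problem is your conversion of the residual $f(x)=e^{s\langle x\rangle^q}$ into $h(x)=(g*e^{s\langle\cdot\rangle^q})(x)$: the lower bound $h(x)\geq c\,f(x)e^{-C_0\langle x\rangle^{q-1}}$ is essentially sharp (take $g$ supported in a ball away from the origin and $q>1$), so $f\leq Ch$ is false in general, and the conversion costs a factor $e^{C_0\langle x\rangle^{q-1}}$ which does not fit into the polynomial slot $\langle x\rangle^{(q-1)|\alpha|}$ for fixed $\alpha$ and large $x$.

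The paper avoids both problems with a simpler decomposition, split at $|t|=|x|$. On $|t|\leq|x|$ one has $\langle x-t\rangle\leq 2\langle x\rangle$, so $\sum_{m}\langle x-t\rangle^{qm-|\alpha|}/m!\leq e\,2^{(q-1)|\alpha|}\langle x\rangle^{(q-1)|\alpha|}$ comes out of the integral and the remaining $\int g(t)e^{s\langle x-t\rangle^q}dt$ is at most $h(x)$; this is the same as your inner-region step. On $|t|\geq|x|$ the sum is \emph{not} expanded into a power of $\langle x-t\rangle$; instead one uses the $\alpha$-free bound $\sum_{m\geq 1}\langle x-t\rangle^{(q-1)m}/m!\leq e^{\langle x-t\rangle^{q-1}}$, and then $\langle x-t\rangle\leq 2\langle t\rangle$ turns the whole integrand into $g(t)e^{s2^q\langle t\rangle^q}e^{2^{q-1}\langle t\rangle^{q-1}}$, whose integral is a constant independent of $x$ and $\alpha$ (here $q_1>q$ enters). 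Finally, since $s>0$ gives $e^{s\langle x-t\rangle^q}\geq 1$ and hence $h(x)\geq\|g\|_{L^1(\RR^d)}>0$ everywhere, that constant is $\leq C\,h(x)\leq C\langle x\rangle^{(q-1)|\alpha|}h(x)$. This trivial lower bound is all the positive-measure hypothesis is needed for; your finer comparison $h\gtrsim fe^{-C_0\langle\cdot\rangle^{q-1}}$ is neither strong enough nor necessary. The moral is that the sum in Lemma \ref{est-for-gaus-perviou} has two useful majorants — the polynomial one $e\langle y\rangle^{(q-1)|\alpha|}$ and the $\alpha$-independent one $e^{\langle y\rangle^{q-1}}$ — and each must be used on the region where it is the right one.
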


\begin{proof} The condition on $g$, as well as the fact that $s>0$, imply
\beqs
(g*e^{s\langle\cdot\rangle^q})(x)=\int_{\mathbb R^d} g(t) e^{s\langle x-t\rangle^{q}}dt\geq \|g\|_{L^1(\RR^d)}>0,\quad \forall x\in\RR^d.
\eeqs
To prove $(i)$ we apply Lemma \ref{est-for-gaus-perviou} with $\alpha\in\NN^d\backslash\{0\}$ to infer
\begin{gather*}
|D^{\alpha}(g*e^{s\langle\cdot\rangle^q})(x)|\leq C^{|\alpha|}\alpha!(I_1+I_2),\quad \mbox{where}\\
I_1=\int_{|t|\leq |x|}g(t) e^{s\langle x-t\rangle^q}\sum_{m=1}^{|\alpha|}\frac{\langle x-t\rangle^{qm-|\alpha|}}{m!}dt,\quad I_2=\int_{|t|\geq |x|}g(t) e^{s\langle x-t\rangle^q}\sum_{m=1}^{|\alpha|}\frac{\langle x-t\rangle^{qm-|\alpha|}}{m!}dt.
\end{gather*}
When $|t|\leq |x|$,
\beqs
\langle x-t\rangle^{qm-|\alpha|}\leq \langle x-t\rangle^{(q-1)|\alpha|}\leq 2^{(q-1)|\alpha|}\langle x\rangle^{(q-1)|\alpha|};
\eeqs
whence $I_1\leq e2^{(q-1)|\alpha|}\langle x\rangle^{(q-1)|\alpha|}(g*e^{\langle \cdot\rangle^q})(x)$. To estimate $I_2$, we proceed as follows
\beqs
I_2\leq \int_{|t|\geq |x|}g(t) e^{s\langle x-t\rangle^q}e^{\langle x-t\rangle^{q-1}}dt\leq \int_{|t|\geq |x|}g(t) e^{s(2\langle t\rangle)^q}e^{(2\langle t\rangle)^{q-1}}dt\leq \|g e^{s2^q\langle \cdot\rangle^q}e^{2^{q-1}\langle \cdot\rangle^{q-1}}\|_{L^1(\RR^d)}.
\eeqs
Now, $(i)$ follows from the fact that $g*e^{s\langle \cdot\rangle}$ is bounded from below by a positive constant.\\
\indent To prove $(ii)$ we apply the Fa\'a di Bruno formula (Proposition \ref{faadibruno}) to the composition of the functions $\rho\mapsto 1/\rho$ with $x\mapsto (g*e^{s\langle\cdot\rangle^q})(x)$. The first part of the lemma gives
\beqs
|D^{\alpha}(1/(g*e^{s\langle\cdot\rangle^q})(x))|\leq \alpha!\sum_{m=1}^{|\alpha|} \frac{C^{|\alpha|+m}m!\langle x\rangle^{(q-1)|\alpha|}}{(g*e^{s\langle\cdot\rangle^q})(x)}\sum_{p(\alpha,m)} \prod_{j=1}^{|\alpha|}\frac{1}{k_j!}.
\eeqs
Now, the bound in $(ii)$ follows from \eqref{est-for-the-com-of-faadib}.
\end{proof}

\begin{remark}
Under the assumptions of the above lemma, by applying Corollary \ref{est-for-the-ggauss}, it is straightforward to verify that
\beqs
|D^{\alpha}(g*e^{s\langle\cdot\rangle^q})(x)|\leq C^{|\alpha|+1}\alpha!e^{s'\langle x\rangle^q},\quad \mbox{for all},\,\, x\in\RR^d,\,\, \alpha\in\NN^d,
\eeqs
for some $C>1$, $s'>s$; i.e. $g*e^{s\langle\cdot\rangle^q}$ is a multiplier for $\SSS'^{\{M_p\}}_{\{p!^{1/q_1}\}}(\RR^d)$.
\end{remark}

Now we are ready to prove the main results of this section; recall that $p!^{\kappa}\subset M_p$ means: there exist $C,L>0$ such that $p!^{\kappa}\leq CL^pM_p$, $\forall p\in\NN$.

\begin{theorem}
Let $q_1>q\geq 1$, $s>0$ and $g$ a measurable function on $\RR^d$ which satisfies \eqref{measur-fun-est}.
\begin{itemize}
\item[$(i)$] Assume $p!^{2-1/q}\subset M_p$. If $S\in {\mathcal S}'^{\{M_p\}}_{\{p!^{1/q_1}\}}(\mathbb R^d)$ satisfies $(g(\varepsilon\,\cdot)*e^{s\langle \cdot\rangle^q})S\in\DD'^{\{M_p\}}_{L^1}(\RR^d)$, $\forall \varepsilon\in(0,1)$, then
    \beq\label{cond-for-ex-ss}
    e^{s'\langle \cdot\rangle^q}S\in \DD'^{\{M_p\}}_{L^1}(\RR^d),\quad \mbox{for all}\,\, s'<s.
    \eeq
    If $q=1$, then \eqref{cond-for-ex-ss} holds true even for $s'=s$.
\item[$(ii)$] Assume $p!^{2-1/q_1}\subset M_p$. If $S\in {\mathcal S}'^{\{M_p\}}_{\{p!^{1/q_1}\}}(\mathbb R^d)$ satisfies $(g(\varepsilon\,\cdot)*e^{s\langle \cdot\rangle^q})S\in\DD'^{\{M_p\}}_{L^1}(\RR^d)$, $\forall \varepsilon\in(0,1)$, then
    \beq\label{exis-conv-ex-ssuf}
    e^{s\langle \cdot\rangle^q}e^{k\langle \cdot\rangle^{(q-1)q_1/(q_1-1)}}S\in \DD'^{\{M_p\}}_{L^1}(\RR^d),\quad \mbox{for all}\,\, k\geq 0.
    \eeq
\end{itemize}
\end{theorem}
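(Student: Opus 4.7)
The strategy for both parts is a single multiplier decomposition. Set $h_\varepsilon(x):=(g(\varepsilon\,\cdot)*e^{s\langle\cdot\rangle^q})(x)$, denote by $\psi$ the target weight of the part in question, and write
\[
\psi\,S=\phi\cdot(h_\varepsilon S),\qquad \phi:=\psi/h_\varepsilon.
\]
By hypothesis $h_\varepsilon S\in\DD'^{\{M_p\}}_{L^1}(\RR^d)$ for every $\varepsilon\in(0,1)$, and $\DD^{\{M_p\}}_{L^\infty}(\RR^d)$ acts by multiplication on $\DD'^{\{M_p\}}_{L^1}(\RR^d)$, so the whole task reduces to choosing $\varepsilon$ and verifying $\phi\in\DD^{\{M_p\}}_{L^\infty}(\RR^d)$. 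The choice of $\varepsilon$ is made through Lemma \ref{lemma-on-lower-boun}: for $(i)$ any sufficiently small $\varepsilon$ yields $h_\varepsilon(x)\geq c\,e^{s\langle x\rangle^q}$, whereas for $(ii)$ I apply the lemma with $k+1$ in place of $k$ to obtain $h_\varepsilon(x)\geq c\,e^{s\langle x\rangle^q}e^{(k+1)\langle x\rangle^{(q-1)q_1/(q_1-1)}}$, so that one factor $e^{-\langle x\rangle^{(q-1)q_1/(q_1-1)}}$ survives the cancellation with $\psi$.

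To estimate $\partial^\alpha\phi$ I would combine Leibniz, Corollary \ref{est-for-the-ggauss} applied to the exponential factors of the numerator, and Lemma \ref{der-of-inv}$(ii)$ applied to $1/h_\varepsilon$. Each factor contributes its own $\beta!$ which recombines to a single $\alpha!$, and the only source of polynomial growth is the factor $\langle x\rangle^{(q-1)|\alpha|}$ produced by the inverse estimate. The lower-order exponentials $e^{\langle x\rangle^{q-1}}$ (and, in $(ii)$, $e^{\langle x\rangle^{(q-1)q_1/(q_1-1)-1}}$) supplied by Corollary \ref{est-for-the-ggauss} have strictly smaller exponents than the surviving decay rate and can thus be absorbed. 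The net pointwise bound is
\[
|\partial^\alpha\phi(x)|\leq C^{|\alpha|+1}\alpha!\,\langle x\rangle^{(q-1)|\alpha|}e^{-\epsilon\langle x\rangle^{r}},
\]
with $r=q$ in $(i)$ (and $\epsilon$ proportional to $s-s'$) and $r=(q-1)q_1/(q_1-1)$ in $(ii)$.

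The key step, and the main obstacle, is the one-variable optimization $\sup_{u\geq 1}u^{(q-1)|\alpha|/r}e^{-\epsilon u}$, obtained after the substitution $u=\langle x\rangle^r$. Elementary calculus places the maximum at $u^{\star}=(q-1)|\alpha|/(\epsilon r)$, and Stirling's formula yields a bound of the shape $C^{|\alpha|}|\alpha|!^{(q-1)/r}$. Since $(q-1)/r=1-1/q$ in $(i)$ and $(q-1)/r=(q_1-1)/q_1=1-1/q_1$ in $(ii)$, this produces
\[
|\partial^\alpha\phi(x)|\leq C^{|\alpha|+1}\alpha!^{2-1/q}\quad\text{in }(i),\qquad |\partial^\alpha\phi(x)|\leq C^{|\alpha|+1}\alpha!^{2-1/q_1}\quad\text{in }(ii),
\]
and the respective hypotheses $p!^{2-1/q}\subset M_p$ and $p!^{2-1/q_1}\subset M_p$ convert these into $|\partial^\alpha\phi(x)|\leq C'^{|\alpha|+1}M_\alpha$, i.e.\ $\phi\in\DD^{\{M_p\}}_{L^\infty}(\RR^d)$. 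Multiplying by $h_\varepsilon S$ gives \eqref{cond-for-ex-ss} in $(i)$ and \eqref{exis-conv-ex-ssuf} in $(ii)$.

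The case $q=1$, $s'=s$ in $(i)$ requires only a brief separate remark: the polynomial loss $\langle x\rangle^{(q-1)|\alpha|}$ is identically $1$, Lemma \ref{lemma-on-lower-boun} directly provides $h_\varepsilon\geq c\,e^{s\langle\cdot\rangle}$, and the derivative estimate collapses to $|\partial^\alpha\phi|\leq C^{|\alpha|+1}\alpha!$. Condition $(M.6)$ alone ($p!\subset M_p$) then yields $\phi\in\DD^{\{M_p\}}_{L^\infty}(\RR^d)$ and the conclusion extends to $s'=s$.
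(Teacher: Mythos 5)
Your proposal is correct and follows the paper's own proof essentially step for step: the same multiplier decomposition $\psi S=(\psi/h_\varepsilon)(h_\varepsilon S)$, the same choice of $\varepsilon$ via Lemma \ref{lemma-on-lower-boun}, the same use of Lemma \ref{der-of-inv}$(ii)$ plus Leibniz and Corollary \ref{est-for-the-ggauss}, and the same absorption of $\langle x\rangle^{(q-1)|\alpha|}$ into the exponential surplus (the paper uses $t^N/N!\leq e^t$, which is your one-variable optimization), yielding the exponents $2-1/q$ and $2-1/q_1$ and the separate trivial treatment of $q=1$, $s'=s$. No gaps worth noting.
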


\begin{proof} First we prove $(ii)$. Let $k\geq 0$ be arbitrary but fixed. By Lemma \ref{lemma-on-lower-boun} there exists $c,\varepsilon\in(0,1)$ such that
\beq\label{bound-for-the-res}
(g(\varepsilon\,\cdot)*e^{s\langle\cdot\rangle^q})(x)\geq ce^{s\langle x\rangle^q}e^{(k+2)\langle x\rangle^{(q-1)q_1/(q_1-1)}},\quad \forall x\in\RR^d.
\eeq
Lemma \ref{der-of-inv} $(ii)$ implies
\beq\label{est-for-low-bbbb}
|D^{\alpha}(1/(g(\varepsilon\, \cdot)*e^{s\langle\cdot\rangle^q})(x))|\leq C_1C^{|\alpha|}\alpha!\langle x\rangle^{(q-1)|\alpha|}e^{-s\langle x\rangle^q}e^{-(k+2)\langle x\rangle^{(q-1)q_1/(q_1-1)}}.
\eeq
Since $t^N/N!\leq e^t$ for all $N\in\mathbb N$ and $t\geq 0$, we deduce
$$\langle x\rangle^{(q-1)|\alpha|}=|\alpha|!^{1-1/q_1}\left(\frac{\langle x\rangle^{|\alpha|(q-1)q_1/(q_1-1)}}{|\alpha|!}\right)^{1-1/q_1}\leq |\alpha|!^{1-1/q_1}e^{(1-1/q_1)\langle x\rangle^{(q-1)q_1/(q_1-1)}}$$
and then
\beqs
|D^{\alpha}(1/(g(\varepsilon\, \cdot)*e^{s\langle\cdot\rangle^q})(x))|\leq C_1C^{|\alpha|}|\alpha|!^{2-1/q_1}e^{-s\langle x\rangle^q}e^{-(k+1)\langle x\rangle^{(q-1)q_1/(q_1-1)}}.
\eeqs
Now, Corollary \ref{est-for-the-ggauss} yields $e^{s\langle \cdot\rangle^q}e^{k\langle \cdot\rangle^{(q-1)q_1/(q_1-1)}}/g(\varepsilon\, \cdot)*e^{s\langle\cdot\rangle^q}\in \DD^{\{p!^{2-1/q_1}\}}_{L^{\infty}}(\RR^d)$ and consequently
\beqs
e^{s\langle \cdot\rangle^q}e^{k\langle \cdot\rangle^{(q-1)q_1/(q_1-1)}}S=\frac{e^{s\langle \cdot\rangle^q}e^{k\langle \cdot\rangle^{(q-1)q_1/(q_1-1)}}}{g(\varepsilon\, \cdot)*e^{s\langle\cdot\rangle^q}} \left((g(\varepsilon\, \cdot)*e^{s\langle\cdot\rangle^q})S\right)\in\DD'^{\{M_p\}}_{L^1}(\RR^d).
\eeqs
To prove $(i)$, assume first $q>1$. Let $s'<s$ be arbitrary but fixed and denote $\kappa=s-s'>0$. Pick $\varepsilon>0$ such that \eqref{bound-for-the-res} holds true with $k=0$. By employing the bound
\beqs
\langle x\rangle^{(q-1)|\alpha|}\leq \kappa^{-|\alpha|(1-1/q)}|\alpha|!^{1-1/q}e^{(1-1/q)\kappa\langle x\rangle^q},
\eeqs
similarly as above we deduce
\beqs
|D^{\alpha}(1/(g(\varepsilon\, \cdot)*e^{s\langle\cdot\rangle^q})(x))|\leq C_2C^{|\alpha|}|\alpha|!^{2-1/q}e^{-s'\langle x\rangle^q}e^{-2\langle x\rangle^{(q-1)q_1/(q_1-1)}}.
\eeqs
Now, one can apply the same technique as before to conclude the validity of $(i)$. When $q=1$, notice that \eqref{est-for-low-bbbb} boils down to $|D^{\alpha}(1/(g(\varepsilon\, \cdot)*e^{s\langle\cdot\rangle})(x))|\leq C_1C^{|\alpha|}\alpha!e^{-s\langle x\rangle}$, which, in view of Corollary \ref{est-for-the-ggauss}, immediately implies $e^{s\langle \cdot\rangle}/g(\varepsilon\, \cdot)*e^{s\langle\cdot\rangle}\in \DD^{\{p!\}}_{L^{\infty}}(\RR^d)$; the rest of the proof is the same as before.
\end{proof}

Applying the above theorem for $g=e^{-\langle\cdot\rangle^{q_1}}$ we immediately deduce the following result.

\begin{corollary}\label{exis-con-cor-ls}
Let $q_1>q\geq 1$ and $s>0$.
\begin{itemize}
\item[$(i)$]  Assume $p!^{2-1/q}\subset M_p$. If the $\SSS'^{\{M_p\}}_{\{p!^{1/q_1}\}}$-convolution of $S\in {\mathcal S}'^{\{M_p\}}_{\{p!^{1/q_1}\}}(\mathbb R^d)$ and $e^{s\langle \cdot\rangle^q}$ exists then \eqref{cond-for-ex-ss} holds true. When $q=1$, \eqref{cond-for-ex-ss} holds true even for $s'=s$.
\item[$(ii)$] Assume $p!^{2-1/q_1}\subset M_p$. If the $\SSS'^{\{M_p\}}_{\{p!^{1/q_1}\}}$-convolution of $S\in {\mathcal S}'^{\{M_p\}}_{\{p!^{1/q_1}\}}(\mathbb R^d)$ and $e^{s\langle \cdot\rangle^q}$ exists then \eqref{exis-conv-ex-ssuf} holds true.
\end{itemize}
\end{corollary}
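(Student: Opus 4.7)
The plan is to apply the preceding theorem with the specific choice $g(x) = e^{-\langle x\rangle^{q_1}}$. The whole point is that this concrete $g$ is itself a test function in $\SSS^{\{M_p\}}_{\{p!^{1/q_1}\}}(\RR^d)$, so the convolvability hypothesis in Definition \ref{defgen} automatically delivers what the theorem needs.

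First I would verify that $g(x) = e^{-\langle x\rangle^{q_1}}$ meets the two-sided bound \eqref{measur-fun-est}: this is trivial with $\eta_1=\eta_2=1$ and $C'_1=C'_2=1$. Next, for any $\varepsilon \in (0,1)$, I would check that $g(\varepsilon\,\cdot) \in \SSS^{\{M_p\}}_{\{p!^{1/q_1}\}}(\RR^d)$. Its decay is like $e^{-\eta\langle x\rangle^{q_1}}$ for some $\eta>0$ (depending on $\varepsilon$), while by Corollary \ref{est-for-the-ggauss} applied to the function $-\langle \varepsilon\,\cdot\rangle^{q_1}$ the derivatives satisfy
\[
|\partial^{\alpha} g(\varepsilon x)| \leq C^{|\alpha|}\alpha!\, e^{-\langle \varepsilon x\rangle^{q_1}+\langle \varepsilon x\rangle^{q_1-1}}.
\]
Since $q_1-1<q_1$ and $p! \subset M_p$ by $(M.6)$, the bound $C^{|\alpha|}\alpha!/M_\alpha$ is controlled by a geometric factor, which after choosing $h>0$ sufficiently small yields
\[
\sup_{\alpha}h^{|\alpha|}\|e^{h|\cdot|^{q_1}}\partial^{\alpha}g(\varepsilon\,\cdot)\|_{L^\infty(\RR^d)}/M_\alpha<\infty,
\]
i.e.\ $g(\varepsilon\,\cdot)\in\SSS^{M_p,h}_{p!^{1/q_1},h}(\RR^d)\subset \SSS^{\{M_p\}}_{\{p!^{1/q_1}\}}(\RR^d)$.

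With these ingredients in hand, the existence of the $\SSS'^{\{M_p\}}_{\{p!^{1/q_1}\}}$-convolution of $S$ and $e^{s\langle\cdot\rangle^q}$, in the sense of Definition \ref{defgen}, asserts $(\varphi * e^{s\langle \cdot\rangle^q})S \in \DD'^{\{M_p\}}_{L^1}(\RR^d)$ for every $\varphi\in\SSS^{\{M_p\}}_{\{p!^{1/q_1}\}}(\RR^d)$. Specialising to $\varphi = g(\varepsilon\,\cdot)$ gives exactly the hypothesis $(g(\varepsilon\,\cdot)*e^{s\langle\cdot\rangle^q})S\in\DD'^{\{M_p\}}_{L^1}(\RR^d)$ of the preceding theorem, for every $\varepsilon\in(0,1)$. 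Parts $(i)$ and $(ii)$ of the theorem then yield \eqref{cond-for-ex-ss} (for all $s'<s$, and also $s'=s$ when $q=1$) under $p!^{2-1/q}\subset M_p$, and \eqref{exis-conv-ex-ssuf} under $p!^{2-1/q_1}\subset M_p$, respectively.

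There is essentially no obstacle: the result is a direct specialisation of the theorem. The only thing to be careful about is ensuring that $g(\varepsilon\,\cdot)$ really sits inside $\SSS^{\{M_p\}}_{\{p!^{1/q_1}\}}(\RR^d)$ so that Definition \ref{defgen} applies to it; this is immediate from Corollary \ref{est-for-the-ggauss} and condition $(M.6)$, as sketched above.
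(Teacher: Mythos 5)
Your proposal is correct and follows exactly the paper's route: the paper proves this corollary by the single remark that one applies the preceding theorem with $g=e^{-\langle\cdot\rangle^{q_1}}$. The only difference is that you spell out the (true and easily verified) fact that $g(\varepsilon\,\cdot)\in\SSS^{\{M_p\}}_{\{p!^{1/q_1}\}}(\RR^d)$, which the paper leaves implicit; this is a reasonable point of care and your verification via Corollary \ref{est-for-the-ggauss} and $(M.6)$ is sound.
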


As a consequence of the above result together with Theorem \ref{cond-for-exi-con-tt} we have the following corollary.

\begin{corollary}\label{exi-con-whe-qq}
Let $q_1>1$, $s>0$ and $S\in\SSS'^{\{M_p\}}_{\{p!^{1/q_1}\}}(\RR^d)$. The $\SSS'^{\{M_p\}}_{\{p!^{1/q_1}\}}$-convolution of $S$ and $e^{s\langle \cdot\rangle}$ exists if and only if $e^{s\langle \cdot\rangle}S\in \DD'^{\{M_p\}}_{L^1}(\RR^d)$.
\end{corollary}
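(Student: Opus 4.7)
The plan is to apply the two previous results, Theorem \ref{cond-for-exi-con-tt} and Corollary \ref{exis-con-cor-ls}, specialised to $q=1$, and observe that the sufficient and necessary conditions they produce collapse to the single condition $e^{s\langle \cdot\rangle}S\in \DD'^{\{M_p\}}_{L^1}(\RR^d)$.

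First I would handle the \emph{if} direction. Setting $q=1$ in Theorem \ref{cond-for-exi-con-tt}, the auxiliary weight $e^{k\langle \cdot\rangle^{(q-1)q_1/(q_1-1)}}$ reduces to $e^k$, a constant, since the exponent $(q-1)q_1/(q_1-1)$ vanishes. Hence condition \eqref{cond-for-exist-of-conv} becomes simply $e^{s\langle \cdot\rangle}S\in\DD'^{\{M_p\}}_{L^1}(\RR^d)$, and Theorem \ref{cond-for-exi-con-tt} delivers the existence of the $\SSS'^{\{M_p\}}_{\{p!^{1/q_1}\}}$-convolution of $S$ with $e^{s\langle \cdot\rangle}$.

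For the \emph{only if} direction I would invoke Corollary \ref{exis-con-cor-ls}(i) with $q=1$. The hypothesis $p!^{2-1/q}\subset M_p$ becomes $p!\subset M_p$, which is built into our assumption $(M.6)$ on $\{M_p\}$ and so is satisfied automatically. The corollary then asserts that when $q=1$ the conclusion \eqref{cond-for-ex-ss} holds even for $s'=s$, that is, $e^{s\langle \cdot\rangle}S\in \DD'^{\{M_p\}}_{L^1}(\RR^d)$.

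There is no real obstacle here: the content of the corollary is the observation that for $q=1$ the two bounds in the sufficient and necessary conditions coincide (the exponent $(q-1)q_1/(q_1-1)$ degenerates to $0$, while Corollary \ref{exis-con-cor-ls}(i) at $q=1$ already handles the boundary case $s'=s$). The only point to check is that the growth assumption $p!^{2-1/q}\subset M_p$ in Corollary \ref{exis-con-cor-ls}(i) reduces to $p!\subset M_p$ when $q=1$, which is the standing assumption $(M.6)$, so no extra hypothesis on $\{M_p\}$ is needed beyond those already imposed in the Preliminaries.
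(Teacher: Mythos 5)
Your proposal is correct and follows exactly the paper's route: the corollary is obtained by combining Theorem \ref{cond-for-exi-con-tt} (where for $q=1$ the exponent $(q-1)q_1/(q_1-1)$ vanishes, so \eqref{cond-for-exist-of-conv} collapses to $e^{s\langle\cdot\rangle}S\in\DD'^{\{M_p\}}_{L^1}(\RR^d)$) with Corollary \ref{exis-con-cor-ls}$(i)$, whose hypothesis $p!^{2-1/q}\subset M_p$ reduces at $q=1$ to the standing assumption $(M.6)$ and whose conclusion holds with $s'=s$. Nothing is missing.
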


Since $(q-1)q_1/(q_1-1)<q$, Corollary \ref{exis-con-cor-ls} together with Theorem \ref{cond-for-exi-con-tt} also imply the following result.

\begin{corollary}\label{cort-l-stv}
Let $q_1>q\geq 1$, $s>0$, $p!^{2-1/q}\subset M_p$ and $S\in\SSS'^{\{M_p\}}_{\{p!^{1/q_1}\}}(\RR^d)$. If the $\SSS'^{\{M_p\}}_{\{p!^{1/q_1}\}}$-convolution of $S$ and $e^{s\langle \cdot\rangle^q}$ exists then the $\SSS'^{\{M_p\}}_{\{p!^{1/q_1}\}}$-convolution of $S$ and $e^{s'\langle \cdot\rangle^q}$ also exists for all $s'<s$, $s'\neq 0$.
\end{corollary}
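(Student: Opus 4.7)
The plan is to combine Corollary \ref{exis-con-cor-ls}(i) with the sufficient condition in Theorem \ref{cond-for-exi-con-tt}, using the key observation (noted by the authors immediately before the statement) that $(q-1)q_1/(q_1-1)<q$. Concretely, the hypothesis that the $\SSS'^{\{M_p\}}_{\{p!^{1/q_1}\}}$-convolution of $S$ with $e^{s\langle \cdot\rangle^q}$ exists, together with $p!^{2-1/q}\subset M_p$, gives via Corollary \ref{exis-con-cor-ls}(i) that
\[
e^{s''\langle \cdot\rangle^q}S\in \DD'^{\{M_p\}}_{L^1}(\RR^d),\qquad \text{for every } s''<s.
\]
To conclude by Theorem \ref{cond-for-exi-con-tt} that the convolution with $e^{s'\langle \cdot\rangle^q}$ exists, it suffices to verify that $e^{s'\langle \cdot\rangle^q}e^{k\langle \cdot\rangle^{(q-1)q_1/(q_1-1)}}S\in\DD'^{\{M_p\}}_{L^1}(\RR^d)$ for every $k\geq 0$.

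The bridge is a multiplier argument. Given $s'<s$, $s'\neq 0$, and $k\geq 0$, I would choose $s''$ with $\max\{s',0\}<s''<s$ (taking $s''=0$ is admissible when $s'<0$, in which case $e^{s''\langle \cdot\rangle^q}S=S$ is already in $\DD'^{\{M_p\}}_{L^1}$). Then write
\[
e^{s'\langle \cdot\rangle^q}e^{k\langle \cdot\rangle^{(q-1)q_1/(q_1-1)}}S=\phi_k\cdot\bigl(e^{s''\langle \cdot\rangle^q}S\bigr),\quad \phi_k:=e^{(s'-s'')\langle \cdot\rangle^q}e^{k\langle \cdot\rangle^{(q-1)q_1/(q_1-1)}}.
\]
The task is then to show $\phi_k$ is a multiplier on $\DD'^{\{M_p\}}_{L^1}(\RR^d)$, which reduces to showing $\phi_k\in \DD^{\{M_p\}}_{L^\infty}(\RR^d)$.

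For the $\DD^{\{M_p\}}_{L^\infty}$-estimate, since $s'-s''<0$ and $(q-1)q_1/(q_1-1)<q$, the leading term in the exponent of $\phi_k$ is $(s'-s'')\langle x\rangle^q$, which dominates the subexponential correction $k\langle x\rangle^{(q-1)q_1/(q_1-1)}$; hence $\phi_k$ is bounded. Applying Corollary \ref{est-for-the-ggauss} to each factor of $\phi_k$ and Leibniz's rule gives an estimate of the form
\[
|\partial^\alpha\phi_k(x)|\leq C_1 C^{|\alpha|}\alpha!\, e^{(s'-s'')\langle x\rangle^q+k\langle x\rangle^{(q-1)q_1/(q_1-1)}+\langle x\rangle^{q-1}+\langle x\rangle^{(q-1)q_1/(q_1-1)-1}},
\]
and the exponent is again bounded above uniformly in $x$. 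Thus $\phi_k\in \DD^{\{p!\}}_{L^\infty}(\RR^d)\subset \DD^{\{M_p\}}_{L^\infty}(\RR^d)$ (by $(M.6)$).

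Finally, since multiplication by an element of $\DD^{\{M_p\}}_{L^\infty}(\RR^d)$ preserves $\DD'^{\{M_p\}}_{L^1}(\RR^d)$ (this is the duality $\langle \DD'^{\{M_p\}}_{L^1},\DD^{\{M_p\}}_{L^\infty}\rangle$ recalled in the preliminaries), we obtain $e^{s'\langle \cdot\rangle^q}e^{k\langle \cdot\rangle^{(q-1)q_1/(q_1-1)}}S\in\DD'^{\{M_p\}}_{L^1}(\RR^d)$ for every $k\geq 0$, and Theorem \ref{cond-for-exi-con-tt} delivers the existence of the $\SSS'^{\{M_p\}}_{\{p!^{1/q_1}\}}$-convolution of $S$ with $e^{s'\langle \cdot\rangle^q}$. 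The only subtlety worth watching is the separation between the cases $s'>0$ and $s'<0$, which is handled uniformly by the choice $s''=\max\{s',0\}+\varepsilon$ for sufficiently small $\varepsilon>0$; the strict inequality $(q-1)q_1/(q_1-1)<q$ is what makes the multiplier bound for $\phi_k$ work in both cases.
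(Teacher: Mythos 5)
Your proposal is correct and is exactly the argument the paper intends: Corollary \ref{exis-con-cor-ls}$(i)$ gives $e^{s''\langle\cdot\rangle^q}S\in\DD'^{\{M_p\}}_{L^1}(\RR^d)$ for all $s''<s$, and since $(q-1)q_1/(q_1-1)<q$ the factor $e^{(s'-s'')\langle\cdot\rangle^q}e^{k\langle\cdot\rangle^{(q-1)q_1/(q_1-1)}}$ is a multiplier in $\DD^{\{M_p\}}_{L^{\infty}}(\RR^d)$ (via Corollary \ref{est-for-the-ggauss} and Leibniz's rule), so the hypothesis \eqref{cond-for-exist-of-conv} of Theorem \ref{cond-for-exi-con-tt} holds for $s'$. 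The paper leaves these details implicit in a one-line justification; your write-up fills them in correctly, including the choice of $s''$ separating the cases $s'>0$ and $s'<0$.
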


\end{document}